\documentclass[11pt,twoside,reqno]{amsart}
\usepackage[latin1]{inputenc}
\usepackage[OT1]{fontenc}
\usepackage{amsmath}
\usepackage{amsthm}
\usepackage{amssymb}
\usepackage[all]{xy}
\usepackage{bbm}
\usepackage{amscd}
\usepackage{a4wide}
\usepackage{enumitem}
\usepackage{mathtools}

\setenumerate[0]{label=(\roman*)}

\DeclareMathOperator{\genus}{\mathsf{g}}
\DeclareMathOperator{\NN}{\mathsf{N}}

\DeclareMathOperator{\cc}{\mathsf{c}}

\DeclareMathOperator{\Spec}{\mathsf{Spec}}

\DeclareMathOperator{\pr}{\mathsf{pr}}

\DeclareMathOperator{\id}{\mathsf{id}}

\DeclareMathOperator{\Hom}{\mathsf{Hom}}

\DeclareMathOperator{\Coh}{\mathsf{Coh}}

\DeclareMathOperator{\Ho}{\mathsf H}
\DeclareMathOperator{\For}{\mathsf{For}}

\DeclareMathOperator{\rank}{\mathsf{rank}}

\let\deg\relax
\DeclareMathOperator{\deg}{\mathsf{deg}}

\DeclareMathOperator{\Aut}{\mathsf{Aut}}

\DeclareMathOperator{\Pic}{\mathsf{Pic}}

\newcommand{\cI}{{\mathcal I}}
\newcommand{\cJ}{{\mathcal J}}

\newcommand{\IC}{\mathbb{C}}

\newcommand{\IN}{\mathbb{N}}
\newcommand{\IP}{\mathbb{P}}

\newcommand{\wH}{\widetilde H}

\DeclareMathOperator{\VB}{\mathsf{VB}}

\DeclareMathOperator{\TT}{\mathsf{T}\!}

\DeclareMathOperator{\FF}{\mathsf{F}}

\DeclareMathOperator{\CC}{\mathsf{C}}

\DeclareMathOperator{\Hi}{\mathsf{Hi}}

\makeatletter
\newcommand{\leqnomode}{\tagsleft@true}
\newcommand{\reqnomode}{\tagsleft@false}
\makeatother

\let\ker\relax
\DeclareMathOperator{\ker}{\mathsf{ker}}

\let\dim\relax
\DeclareMathOperator{\dim}{\mathsf{dim}}

\let\log\relax
\DeclareMathOperator{\log}{\mathsf{log}}

\newcommand{\sym}{\mathfrak S}

\newcommand{\cP}{\mathcal P}
\newcommand{\cQ}{\mathcal Q}

\newcommand{\reg}{\mathcal O}

\newcommand{\eps}{\varepsilon}

\renewcommand{\theta}{\vartheta}
\renewcommand{\rho}{\varrho}
\renewcommand{\phi}{\varphi}
\renewcommand{\_}{\underline{\,\,\,\,}}

\usepackage[usenames,dvipsnames]{xcolor}
\usepackage{hyperref}
\hypersetup{
    colorlinks,
    linkcolor={red!50!black},
    citecolor={blue!50!black},
    urlcolor={blue!80!black}
}
\usepackage{aliascnt} 

\newtheorem{theorem}{Theorem}[section]

 \newaliascnt{proposition}{theorem}
  \newtheorem{prop}[proposition]{Proposition}
  \aliascntresetthe{proposition}

  \newaliascnt{lemma}{theorem}
  \newtheorem{lemma}[lemma]{Lemma}
  \aliascntresetthe{lemma}

  \newaliascnt{corollary}{theorem}
  \newtheorem{cor}[corollary]{Corollary}
  \aliascntresetthe{corollary}

\theoremstyle{definition}


  \newaliascnt{definition}{theorem}
  
  \aliascntresetthe{definition}

  \newaliascnt{remark}{theorem}
  \newtheorem{remark}[remark]{Remark}
  \aliascntresetthe{remark}

  \newaliascnt{condition}{theorem}
  
  \aliascntresetthe{condition}

  \newaliascnt{question}{theorem}
  
  \aliascntresetthe{question}

  \newaliascnt{example}{theorem}
  \newtheorem{example}[example]{Example}
  \aliascntresetthe{example}


\begin{document}

\title[Logarithmic Higgs Bundles on punctual Hilbert schemes]{Fourier-Mukai transformation and
Logarithmic Higgs Bundles on 
punctual Hilbert schemes}

\author[I.\ Biswas]{Indranil Biswas}

\address{School of Mathematics, Tata Institute of Fundamental
Research, Homi Bhabha Road, Mumbai 400005, India}

\email{indranil@math.tifr.res.in}

\author[A.\ Krug]{Andreas Krug}

\address{Fachbereich Mathematik und Informatik,
Philipps-Universit\"at Marburg, Lahnberge, Hans-Meerwein-Strasse, D-35032
Marburg, Germany}

\email{andkrug@mathematik.uni-marburg.de}

\subjclass[2010]{14D23, 14D20, 14H30}

\keywords{Logarithmic Higgs bundle, Hilbert scheme, Fourier--Mukai transformation, stability.}

\begin{abstract}
Given a vector bundle $E$ on a smooth projective curve or surface $X$ carrying the structure 
of a $V$-twisted Hitchin pair for some vector bundle $V$, we observe that the associated 
tautological bundle $E^{[n]}$ on the punctual Hilbert scheme of points $X^{[n]}$ has an 
induced structure of a $((V^\vee)^{[n]})^\vee$-twisted Hitchin pair, where $(V^\vee)^{[n]}$ 
is a vector bundle on $X^{[n]}$ constructed using the dual $V^\vee$ of $V$. In particular, a 
Higgs bundle on $X$ induces a logarithmic Higgs bundle on the Hilbert scheme $X^{[n]}$. We 
then show that the known results on stability of tautological bundles and reconstruction from 
tautological bundles generalize to tautological Hitchin pairs.
\end{abstract}

\maketitle

\section{Introduction}

Let $X$ be a smooth projective variety with $\dim X\le 2$. In this case, the Hilbert scheme 
$X^{[n]}$ of $n$ points on $X$ is a smooth projective variety of dimension $\dim 
X^{[n]}=n\cdot\dim X$. If $X$ is a curve, $X^{[n]}$ coincides with the symmetric product 
$X^{(n)}:=X^n/\sym_n$, where $\sym_n$ is the group of permutations of $\{1,\dots ,n\}$. If 
$X$ is a surface, then $X^{[n]}$ is a resolution of the singularities of $X^{(n)}$ via the 
Hilbert--Chow morphism $\mu\colon X^{[n]}\to X^{(n)}$. The points parametrizing non-reduced 
sub-schemes of $X$ form a divisor $B_n\subset X^{[n]}$. Given a vector bundle $E$ on $X$, we 
get a vector bundle $E^{[n]}$ on $X^{[n]}$ which is the direct image of the pullback of $E$ 
to the universal subscheme $\Xi_n\subset X\times X^{[n]}$. These vector bundles on 
$X^{[n]}$ play a crucial role in the investigation of the topology and geometry of the Hilbert 
schemes \cite{LehnChern, LehnSorger1, LehnSorger2}, but are also useful tools for studying properties of the bundles on $X$ itself; see, for example \cite{Voisin-syzygy, Ein-Lazarsfeld--gonalityconj, Agostini}.

Higgs bundles constitute an extensively studied topic; they appear in algebraic geometry,
differential geometry, symplectic geometry and also in representation theory \cite{Ngo}. We recall that the
Higgs bundles on Riemann surfaces were introduced by Hitchin \cite{Hi} and the Higgs
bundles on higher dimensional complex manifolds were introduced by Simpson \cite{Si}.

It turns out that a Higgs field $E\to E\otimes \Omega_X$ of a vector bundle $E$ on $X$ does not, in general, 
induce a Higgs field of $E^{[n]}$; see \autoref{ex:noHiggs}. However we observe that it induces a 
\emph{logarithmic Higgs field} on $E^{[n]}$, that is a homomorphism $E^{[n]}\to E^{[n]}\otimes 
\Omega_{X^{[n]}}(\log B_n)$ or, equivalently, a homomorphism $E^{[n]}\otimes \TT_{X^{[n]}}(-\log B_n)\to 
E^{[n]}$.

The construction is as follows. As proved by Stapleton \cite[Thm.\ B]{Stapletontaut}, there is an isomorphism $\TT_{X^{[n]}}(-\log B_n)\cong (\TT_X)^{[n]}$, identifying the sheaf of logarithmic vector fields on $X^{[n]}$ with the tautological bundle associated to the tangent sheaf of $X$. Now, the logarithmic Higgs field on $E^{[n]}$ associated to a Higgs field $\theta\colon E\otimes \TT_X\to E$ is given by the composition
\[
E^{[n]}\otimes \TT_{X^{[n]}}(-\log B_n)\cong E^{[n]}\otimes \TT_X^{[n]}\xrightarrow\nu (E\otimes \TT_X)^{[n]}\xrightarrow{\theta^{[n]}} E^{[n]}\, 
\]
where $\nu \colon E^{[n]}\otimes \TT_X^{[n]}\to (E\otimes \TT_X)^{[n]}$ is the cup product relative to the projection morphism $\pr_{X^[n]}\colon \Xi_n\to X^{[n]}$; compare \autoref{subsect:push} and \autoref{rem:taut}.
We denote this induced Higgs field by $\theta^{\{n\}}\colon E^{[n]}\otimes \TT_{X^{[n]}}(-\log B_n)\to E^{[n]}$, and get the \emph{tautological logarithmic Higgs bundle} $(E,\theta)^{[n]}:=(E,\theta)$.

There is the more general notion of a \emph{$V$-cotwisted Hitchin pair}, for $V$ a vector bundle of $X$, such 
that a Higgs bundle on $X$ is exactly a $\TT_X$-cotwisted Hitchin pair, and a logarithmic Higgs bundle on 
$X^{[n]}$ is exactly a $\TT_{X^{[n]}}(-\log B_n)$-cotwisted Hitchin pair; see \autoref{subsect:co-pairs}. The 
above construction generalizes in such a way that for a $V$-cotwisted Hitchin pair $(E,\theta)$ on $X$, we get 
a tautological $V^{[n]}$-cotwisted Hitchin pair $(E,\theta)^{[n]}=(E^{[n]},\theta^{\{n\}})$ on $X^{[n]}$; see 
\autoref{subsect:taut}.

In this article, we proof that basically all the known results on stability of tautological bundles and 
reconstruction from tautological bundles lift to results for tautological Hitchin pairs. As probably the most 
interesting special case, we get results on tautological logarithmic Higgs bundles. Concretely, we proof the 
following results.

\begin{theorem}\label{thm:g2}
Let $C$ be a smooth projective curve of genus $\genus(C)\ge 2$, and let $(E,\theta)$, $(F,\eta)$ be semi-stable $V$-cotwisted Hitchin bundles for some vector bundle $V\in \VB(C)$. Then, for every $n\in \IN$, we have
\[
 (E,\theta)^{[n]}\cong (F,\eta)^{[n]}\quad\implies \quad (E,\theta)\cong (F,\eta)\,.
\]
\end{theorem}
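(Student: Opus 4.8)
The plan is to deduce the theorem from two morphism-preservation statements for tautological bundles over a curve with $\genus(C)\ge 2$. Writing $\pi=\pr_{C^{(n)}}\colon \Xi_n\to C^{(n)}$ and $p\colon \Xi_n\to C$ for the two projections, so that $E^{[n]}=\pi_*p^*E$ and $\phi\mapsto\phi^{[n]}$ is functorial, the first statement is that
\[
\Hom_C(E,F)\longrightarrow \Hom_{C^{(n)}}(E^{[n]},F^{[n]}),\qquad \phi\mapsto\phi^{[n]},
\]
is bijective for the semi-stable bundles at hand, and the second is that
\[
\Hom_C(E\otimes V,F)\longrightarrow \Hom_{C^{(n)}}(E^{[n]}\otimes V^{[n]},F^{[n]}),\qquad \alpha\mapsto\alpha^{[n]}\circ\nu,
\]
is injective, where $\nu\colon E^{[n]}\otimes V^{[n]}\to (E\otimes V)^{[n]}$ is the relative cup product used to build the tautological Higgs field.

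Granting these, the theorem follows formally. Let $\Phi\colon E^{[n]}\isomor F^{[n]}$ be the isomorphism of bundles underlying a given isomorphism $(E,\theta)^{[n]}\cong (F,\eta)^{[n]}$ of Hitchin pairs. By the first (bijective) map, $\Phi=\phi^{[n]}$ for a unique $\phi\in\Hom_C(E,F)$, and $\phi$ is itself an isomorphism: writing $\Phi^{-1}=\psi^{[n]}$ and using functoriality and injectivity of $\phi\mapsto\phi^{[n]}$ gives $\phi\circ\psi=\id_F$ and $\psi\circ\phi=\id_E$. That $\phi$ intertwines $\theta$ and $\eta$ is then pure bookkeeping. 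Indeed, by construction $\theta^{\{n\}}=\theta^{[n]}\circ\nu$, the lift $(-)^{[n]}$ is functorial so that $\phi^{[n]}\circ\theta^{[n]}=(\phi\circ\theta)^{[n]}$, and the cup product is natural in its bundle argument, $\nu\circ(\phi^{[n]}\otimes\id_{V^{[n]}})=(\phi\otimes\id_V)^{[n]}\circ\nu$. Substituting $\Phi=\phi^{[n]}$ into the Hitchin-pair compatibility $\Phi\circ\theta^{\{n\}}=\eta^{\{n\}}\circ(\Phi\otimes\id_{V^{[n]}})$ and simplifying both sides with these two identities turns it into
\[
(\phi\circ\theta)^{[n]}\circ\nu=\bigl(\eta\circ(\phi\otimes\id_V)\bigr)^{[n]}\circ\nu,
\]
and the injectivity of the second map forces $\phi\circ\theta=\eta\circ(\phi\otimes\id_V)$. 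Thus $\phi\colon(E,\theta)\isomor(F,\eta)$ is an isomorphism of $V$-cotwisted Hitchin pairs.

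The substance, and the main obstacle, lies in the two morphism-preservation statements, which I would prove by reducing them to cohomology on $C$. Injectivity is the easy half: $\pi$ is finite and $p$ is flat and surjective, so $\pi_*p^*$ is faithful on morphisms, and naturality of $\nu$ carries this over to the twisted map. Bijectivity of the first map is the hard half; via the pushforward formulas for tautological sheaves (in the spirit of Danila and Scala) one expresses $\Hom_{C^{(n)}}(E^{[n]},F^{[n]})$ through cohomology groups $H^i(C,E^\vee\otimes F\otimes L)$ indexed by the diagonal strata of the symmetric product, and here the hypotheses are decisive: semi-stability together with $\genus(C)\ge 2$ pins down the slopes so that all contributions beyond the leading term $\Hom_C(E,F)$ vanish. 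This vanishing is precisely the generalization, to the twisted Hitchin setting, of the known reconstruction results for tautological bundles; once it is in hand the rest of the argument uses nothing beyond functoriality of $(-)^{[n]}$ and naturality of $\nu$.
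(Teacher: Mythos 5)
Your formal reduction in the second paragraph is fine, and the injectivity statement you need is indeed the easy half ($p^*$ is faithful because $p$ is faithfully flat, and $q_*$ is exact and annihilates no sheaf because $q$ is finite, so $q_*p^*$ is faithful). The gap is your first claim: the map $\Hom_C(E,F)\to\Hom_{C^{(n)}}(E^{[n]},F^{[n]})$ is \emph{not} bijective under the hypotheses of the theorem, and the mechanism you propose for proving it cannot work. Semi-stability places no restriction whatsoever on the slopes of $E$ and $F$ (the theorem has no slope hypothesis), so the correction terms in any K\"unneth/Danila--Scala-type formula, which involve groups such as $\Ho^0(E^\vee)\otimes\Ho^0(F)$, need not vanish. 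The simplest counterexample is $E=F=\reg_C$ with $\theta=\eta=0$, which are stable Hitchin bundles on any curve of genus $\ge 2$: here $\reg_C^{[n]}=q_*\reg_{\Xi_n}$ contains $\reg_{C^{(n)}}$ as a direct summand (split off by the normalized trace map, since we are in characteristic zero), so $\dim\End(\reg_C^{[n]})\ge 2$ while $\End(\reg_C)=\IC$. Worse, since the induced field $\theta^{\{n\}}$ is zero, every extra automorphism of $\reg_C^{[n]}$ is automatically an automorphism of the Hitchin pair $(\reg_C,0)^{[n]}$, so you cannot rescue the argument by restricting attention to Higgs-compatible isomorphisms: there exist isomorphisms $(E,\theta)^{[n]}\cong(F,\eta)^{[n]}$ that are not of the form $\phi^{[n]}$, and your very first step (``$\Phi=\phi^{[n]}$ for a unique $\phi$'') already fails.

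The paper's proof avoids computing $\Hom(E^{[n]},F^{[n]})$ altogether, which is exactly why it needs no slope hypotheses. It restricts $(E,\theta)^{[n]}$ along the small-diagonal embedding $\iota\colon C\to C^{(n)}$, the classifying map of the $(n-1)$-st infinitesimal thickening $Z=V(\cI_\Delta^n)\subset C\times C$; by base change, $\iota^*(E,\theta)^{[n]}\cong\Phi^Z_{\reg_Z}(E,\theta)$, a jet-bundle-like object filtered by the Hitchin subsheaves $\Phi^Z_{\cJ_\Delta^k}(E,\theta)$ with graded pieces $(E,\theta)\otimes\omega_C^{\otimes k}$ (up to the equivalence $\phi^\#$). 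Semi-stability of $(E,\theta)$ makes each graded piece semi-stable, and $\deg\omega_C>0$ (this is where $\genus(C)\ge 2$ enters) makes their slopes strictly increasing, so this filtration is the Harder--Narasimhan filtration of the restriction; uniqueness of the Harder--Narasimhan filtration then recovers $(E,\theta)$ as its first graded piece. If you wanted to salvage your route, you would have to impose slope conditions strong enough to kill the correction terms, which would prove a strictly weaker statement than the theorem.
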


For a scheme $Y$, by $\VB(Y)$ and $\Coh(Y)$ we denote the category of vector bundles and coherent sheaves
respectively on $Y$.

\begin{theorem}\label{thm:g1}
Let $C$ be an elliptic curve, and let $(E,\theta)$, $(F,\eta)$ be $V$-cotwisted Hitchin bundles for some vector bundle $V\in \VB(C)$. Then, for every $n\in \IN$, we have
\[
 (E,\theta)^{[n]}\cong (F,\eta)^{[n]}\quad\implies \quad (E,\theta)\cong (F,\eta)\,.
\]
\end{theorem}

\begin{theorem}\label{thm:rechigher}
 Let $X$ be a smooth quasi-projective variety of dimension $\dim X\ge 2$, let $V\in\VB(X)$, and let $(E,\theta),(F,\eta)$ be $V$-cotwisted Hitchin pairs for some vector bundle $V\in \VB(C)$, such that the
coherent sheaves $E$ and $F$ are reflexive.
Then, for every $n\in \IN$,
\[
 (E,\theta)^{[n]}\cong (F,\eta)^{[n]}\quad\implies \quad (E,\theta)\cong (F,\eta)\,.
\]
\end{theorem}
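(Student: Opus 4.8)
The plan is to realise the whole assignment $(E,\theta)\mapsto (E,\theta)^{[n]}$ as a functor and then to produce a one-sided inverse for it. Write $\Phi$ for the tautological functor $E\mapsto E^{[n]}=\pr_{X^{[n]}*}(\pr_X^*E|_{\Xi_n})$, and recall that the induced Hitchin field $\theta^{\{n\}}$ is built \emph{functorially} from $\theta$ through the cup product $\nu$ and $\theta^{[n]}$. Hence $\Phi$ upgrades to a functor from $V$-cotwisted pairs on $X$ to $\widetilde V$-cotwisted pairs on $X^{[n]}$, where $\widetilde V:=((V^\vee)^{[n]})^\vee$ depends only on $V$ and is therefore common to both given pairs. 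The problem then reduces to constructing a functor $R$ in the opposite direction together with a natural isomorphism $R\circ\Phi\cong\id$ \emph{as functors of pairs}: once this is available, the isomorphism $(E,\theta)^{[n]}\cong (F,\eta)^{[n]}$ is simply transported by $R$ to an isomorphism $(E,\theta)\cong (F,\eta)$, and no direct cancellation of summands of the objects $E,F$ is required. (Note first $\rank E=\rank F$, since $\rank E^{[n]}=n\cdot\rank E$.)

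The role of the hypotheses — reflexive, $\dim X\ge 2$, and no stability assumption — is to reduce everything to the locally free locus and then extend by reflexivity. Let $U\subseteq X$ be the common locally free locus of $E$ and $F$; since $E,F$ are reflexive on a smooth variety, $X\setminus U$ has codimension $\ge 3$, in particular $\ge 2$. The construction commutes with restriction to opens: writing $U^{[n]}\subseteq X^{[n]}$ for the open locus of subschemes supported in $U$, one has $(E|_U,\theta|_U)^{[n]}=(E,\theta)^{[n]}|_{U^{[n]}}$, so the given isomorphism restricts over $U^{[n]}$. I would therefore solve the problem over $U$, where $E,F$ are vector bundles, and then extend: an isomorphism $(E,\theta)|_U\cong(F,\eta)|_U$ of pairs extends across the codimension-$\ge 2$ set $X\setminus U$ because $E=j_*(E|_U)$, $F=j_*(F|_U)$ and, $V$ being locally free, $E\otimes V$ and $F\otimes V$ are reflexive as well, so that both the Hitchin fields and the intertwining identity extend uniquely. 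This is exactly where reflexivity replaces the semistability used in the curve cases.

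The engine of $R$ is the integral-transform description of $\Phi$. With $p,q$ the two projections from $\Xi_n$ one has $\Phi(E)=q_*p^*E$, and a candidate inverse comes from an adjoint $\Psi$ of the form $p_*q^*$ (up to a twist), so that $\Psi\Phi(E)$ is computed by the fibre product $\Xi_n\times_{X^{[n]}}\Xi_n\to X\times X$. This fibre product has a diagonal component $\Xi_n$, lying over the diagonal of $X\times X$ and contributing exactly $\id$, and a residual \emph{off-diagonal} component parametrising pairs of distinct points of a common subscheme; equivalently, over the locus of disjoint configurations the additivity isomorphism $\phi^*E^{[n]}\cong\pr_1^*E\oplus\pr_2^*E^{[n-1]}$ on $X\times X^{[n-1]}$ splits off a canonical copy of $E$. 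I expect the main difficulty to lie precisely here: to make the diagonal (identity) contribution a \emph{canonical} direct summand that \emph{any} abstract isomorphism must preserve, thereby bypassing the failure of cancellation for the underlying sheaves. The route I would take is to separate the diagonal from the off-diagonal part by a cohomological-degree or codimension-of-support argument that is available only when $\dim X\ge 2$, so that an exact functor (a suitable truncation built into $R$) isolates $\id$ on the nose, and to use reflexivity to discard the positive-codimension loci on which this clean separation could fail. This delivers $R\circ\Phi\cong\id$ on reflexive sheaves.

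It remains to route the Hitchin field through $R$, which is the genuinely new point beyond the known sheaf reconstruction. Here I would equip $R$ with a natural isomorphism $R(G\otimes\widetilde V)\cong R(G)\otimes V$ compatible with $R\circ\Phi\cong\id$; such a comparison exists because $\widetilde V=((V^\vee)^{[n]})^\vee$ is itself tautological and the transform defining $R$ commutes with tensoring by a pulled-back bundle up to this twist. Granting it, $R$ sends the tautological field $\theta^{\{n\}}\colon E^{[n]}\otimes\widetilde V\to E^{[n]}$ to a morphism $E\otimes V\to E$ which, by naturality of the construction of $\theta^{\{n\}}$ from $\theta$ via $\nu$ and $\theta^{[n]}$, is identified with $\theta$ itself, and likewise for $(F,\eta)$. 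Thus $R$ promotes to a functor on cotwisted pairs with $R\circ\Phi\cong\id$, and applying it to the given isomorphism of tautological pairs yields the desired $(E,\theta)\cong(F,\eta)$. The only steps demanding real work are the canonical splitting of the diagonal summand and the verification of the twist-compatibility of $R$; the transport of the isomorphism and of the intertwining identity is then formal.
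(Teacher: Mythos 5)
Your high-level strategy --- upgrade $(E,\theta)\mapsto (E,\theta)^{[n]}$ to a functor on cotwisted pairs, build a left inverse $R$ with $R\circ\Phi\cong\id$ so that the given isomorphism transports formally, and use reflexivity together with $\dim X\ge 2$ to extend across a small locus --- is indeed the shape of the paper's argument. But the engine you propose for $R$ has a genuine gap at exactly the point you flag yourself. The fibre product $\Xi_n\times_{X^{[n]}}\Xi_n$ is \emph{not} a disjoint union of a diagonal and an off-diagonal component: the two pieces meet over the punctual locus (where points of $\xi$ collide), so $\Psi\Phi(E)=p_*q^*q_*p^*E$ carries no canonical direct-summand copy of $E$. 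Worse, both separation mechanisms you suggest are unavailable. The off-diagonal component dominates $X\times X$ (any two points of $X$ lie on some length-$n$ subscheme once $n\ge 2$), so its contribution is not supported in positive codimension and no codimension-of-support argument can isolate the diagonal; and since the whole construction is deliberately underived (the paper's functors $p^*$, $\otimes$, $q_*$ are not derived, which is what makes the Hitchin enhancement work), there is no cohomological-degree truncation to exploit either. So the step ``an exact functor isolates $\id$ on the nose'' is an unsupported hope, and it is precisely the crux; I do not see a repair along these lines.

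What the paper does instead is bypass the adjoint kernel entirely via equivariant descent over the configuration space. One restricts to the open locus $X^{[n]}_0\cong X^n_0/\sym_n$ of reduced subschemes, where $\pi_0^*\mu_{0*}i^*(E,\theta)^{[n]}$ is identified with $\bigl(\bigoplus_{i=1}^n\pr_i^*(E,\theta)\bigr)\big|_{X^n_0}$ as a $\sym_n$-\emph{equivariant} Hitchin pair; then pushes forward by $j_*$ along $X^n_0\hookrightarrow X^n$ --- this, and not any truncation, is where $\dim X\ge 2$ enters (the big diagonal has codimension $\ge 2$ in $X^n$) and where reflexivity of $E$ is used (so that $j_*j^*$ recovers $\bigoplus_i\pr_i^*E$, by the reflexive-sheaf extension of Stapleton's Lemma~1.1) --- obtaining $\CC(E,\theta)=\bigoplus_i\pr_i^*(E,\theta)$ with its permutation linearization; finally one restricts to the small diagonal and takes invariants, $\bigl(\delta^*\CC(E,\theta)\bigr)^{\sym_n}\cong(E,\theta)$. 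The $\sym_n$-equivariant structure is the idea missing from your plan: the copy of $(E,\theta)$ is made canonical not as a direct summand of an adjoint transform but as the invariants of $E^{\oplus n}$ under permutation, so any abstract isomorphism of tautological pairs is transported automatically. Your reflexivity-extension step and the routing of the Hitchin field through functoriality are sound as far as they go; plugged into the paper's $R=\bigl(\delta^*j_*\pi_0^*\mu_{0*}i^*(-)\bigr)^{\sym_n}$ they become a correct proof, but as written your $R$ does not exist.
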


Let $C$ be a smooth projective curve. In the group $\NN^1(C^{(n)})$ of divisors modulo numerical equivalence, we consider the ample class $H_n$ that is represented by $C^{(n-1)}+x\subset C^{(n)}$ for any $x\in C$; see \cite[Sect.\ 1.3]{Krug--stab} for details.

\begin{theorem}\label{thm:curvestab}
Let $C$ be a smooth projective curve, and let $(E,\theta)$ be a Hitchin bundle on $C$.
Then for every $n\in \IN$, the following two statements hold.
\begin{enumerate}
 \item If $(E,\theta)$ is stable and $\mu(E)\notin[-1,n-1]$, then the logarithmic Higgs bundle $(E^{[n]},\theta^{\{n\}})$ is stable with respect to $H_n$.
 \item If $(E,\theta)$ is semi-stable and $\mu(E)\notin(-1,n-1)$, then the logarithmic Higgs bundle $(E^{[n]},\theta^{\{n\}})$ is semi-stable with respect to $H_n$.
\end{enumerate}
\end{theorem}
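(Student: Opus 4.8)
The plan is to adapt the proof of the corresponding stability result for ordinary tautological bundles in \cite{Krug--stab}, the only genuinely new ingredient being that the induced Higgs structure is respected. Recall first that $(E^{[n]},\theta^{\{n\}})$ is (semi)stable as a Hitchin pair precisely when every $\theta^{\{n\}}$-invariant saturated subsheaf $0\neq F\subsetneq E^{[n]}$ (that is, with $\theta^{\{n\}}(F\otimes \TT_{C^{(n)}}(-\log B_n))\subseteq F$) satisfies $\mu_{H_n}(F)<\mu_{H_n}(E^{[n]})$ (resp.\ $\le$). Since the numerical hypotheses on $\mu(E)$ and the ample class $H_n$ are exactly those appearing in the vector bundle statement of \cite{Krug--stab}, I would argue by contradiction: a $\theta^{\{n\}}$-invariant destabilizing subsheaf $F\subset E^{[n]}$ should, by the construction of \cite{Krug--stab}, yield a destabilizing subsheaf $G\subset E$, and the point is to check that $G$ can be taken $\theta$-invariant, so that the assumed Hitchin-(semi)stability of $(E,\theta)$ is contradicted.

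The compatibility with the Higgs fields is where the argument really lives, and I would establish it by restriction to suitable test curves. Fix general distinct points $x_2,\dots,x_n\in C$ and let $j\colon C\hookrightarrow C^{(n)}$, $x\mapsto x+x_2+\cdots+x_n$; then $H_n\cdot j_*[C]=1$, and the fibrewise description of the tautological functor gives canonical splittings $j^*E^{[n]}\cong E\oplus\reg_C^{\oplus(n-1)r}$ and $j^*\TT_C^{[n]}\cong \TT_C\oplus\reg_C^{\oplus(n-1)}$, with $r=\rank E$, indexed by the moving point $x$ and the fixed points $x_i$. The key computation is that, with respect to these splittings, the restricted field $j^*\theta^{\{n\}}$ is block-diagonal: its $E\otimes\TT_C\to E$ component equals $\theta$, its $x_i$-components equal the fibres $\theta_{x_i}$, and the mixed terms vanish. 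This is because $\theta^{\{n\}}=\theta^{[n]}\circ\nu$ is assembled functorially from $\theta$ through the relative cup product $\nu$, and over the test curve the universal subscheme $\Xi_n$ splits into the disjoint components carrying $x$ and the $x_i$, so $\nu$ pairs only matching components. Granting this, if $F$ is $\theta^{\{n\}}$-invariant then $j^*F$ is invariant under the block-diagonal $j^*\theta^{\{n\}}$; since the summand $E$ is itself invariant, both $j^*F\cap E$ and the image of $j^*F$ in $E$ are $\theta$-invariant subsheaves of $E$.

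It then remains to feed this into the numerical part of \cite{Krug--stab}: because $H_n\cdot j_*[C]=1$, the $H_n$-slope of $F$ is controlled by degrees on the test curves, and the hypotheses $\mu(E)\notin[-1,n-1]$ (resp.\ $\notin(-1,n-1)$) force the $\theta$-invariant subsheaf of $E$ extracted above to (semi)destabilize $(E,\theta)$ — a contradiction. The treatment of saturations, of the passage from a single test curve to a covering family, and of the non-locally-free case is identical to the vector bundle argument and I would simply import it. The main obstacle is thus the block-diagonality of $j^*\theta^{\{n\}}$: one must verify carefully that the off-diagonal contributions of the relative cup product — those mixing the moving point with the fixed points — genuinely vanish on restriction, and that the surviving diagonal block is exactly $\theta$ and not merely $\theta$ up to an automorphism. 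Once this is in place, the reduction to $(E,\theta)$, and hence to \cite{Krug--stab}, is immediate.
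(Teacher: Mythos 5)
Your overall strategy---reduce to the numerical argument of \cite{Krug--stab} by restricting to test curves and extracting a $\theta$-invariant subsheaf of $E$---is in the right spirit, but the key computation it rests on is false. The restriction $j^*E^{[n]}$ does \emph{not} split as $E\oplus\reg_C^{\oplus(n-1)r}$: such a splitting exists only over the open locus $C\setminus\{x_2,\dots,x_n\}$, and it fails precisely at the collision points $x=x_i$, where the pullback of the universal family $\Xi_n$ to the test curve is non-reduced rather than a disjoint union of components carrying $x$ and the $x_i$. The correct global statement is a (generally non-split) extension: lifting $j$ to $\iota\colon C\to C^n$, $t\mapsto (t,x_2,\dots,x_n)$, the restriction of the exact sequence of \cite[Prop.\ 1.5]{Krug--stab} reads
\[
0\to E(-x_2-\dots-x_n)\to \iota^*\pi_n^*E^{[n]}\to \reg_C^{\oplus (n-1)r}\to 0\,,
\]
so the ``moving'' block is $E$ twisted down by the fixed points, and it sits as a subsheaf, not as a direct summand. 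A concrete counterexample to your splitting: for $C=\IP^1$, $n=2$, $E=\reg_{\IP^1}$, one has $\reg_{\IP^1}^{[2]}\cong \reg_{\IP^2}\oplus\reg_{\IP^2}(-1)$, whose restriction to the test line is $\reg_{\IP^1}\oplus\reg_{\IP^1}(-1)$ of degree $-1$, not $\reg_{\IP^1}^{\oplus 2}$ of degree $0$; the same failure affects your claimed splitting of $j^*\TT_C^{[n]}$. This is not a technicality: the twist by $-\sum_{i\ge 2}x_i$, of degree $-(n-1)$, is exactly what produces the interval $[-1,n-1]$ in the hypotheses, so any degree bookkeeping that ignores it cannot yield the stated theorem; and since the splitting does not exist, the block-diagonality of $j^*\theta^{\{n\}}$---your mechanism for producing a $\theta$-invariant destabilizing subsheaf of $E$---has no meaning at the collision points.

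The paper repairs exactly this point, and does so \emph{before} restricting to test curves: working $\sym_n$-equivariantly on $C^n$ (via \autoref{lem:stabpullbackequi}), it upgrades the exact sequence above to a short exact sequence of $\pi_n^*V^{[n]}$-cotwisted Hitchin pairs, namely \eqref{eq:Hitchinkeyses}, using the Fourier--Mukai formalism of \autoref{subsect:FM}; this is the Hitchin-theoretic substitute for your block-diagonality claim, and it is what makes $A'=i_E^{-1}(A)$ and $A''=p_E(A)$ into Hitchin subsheaves of the correct (twisted, respectively tautological) Hitchin pairs (\autoref{lem:A'''}). Note also that your sketch only extracts invariant subsheaves of $E$ itself, which corresponds to the case $\mu(E)>n-1$; for the complementary case $\mu(E)<-1$ the argument must instead use the quotient term $\opr_1^*\pi_{n-1}^*(E,\theta)^{[n-1]}$, restrict along $C^{n-1}\hookrightarrow C^n$, and induct on $n$ using the (semi-)stability of $(E,\theta)^{[n-1]}$---a step entirely absent from your proposal.
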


The paper is organized as follows. In \autoref{sect:Hitchin}, we collect basic notions and results on Hitchin 
pairs needed for later use. In particular, we define the push-forward of Hitchin pairs along flat and finite 
morphisms, and define Fourier--Mukai transforms of Hitchin pairs for a certain class of kernels; see 
\autoref{subsect:push} and \autoref{subsect:FM}. In \autoref{sect:Hilbert}, we explain, in some more detail 
than done in this introduction, how to get induced structures of Hitchin pairs and logarithmic Higgs bundles 
on tautological bundles on Hilbert schemes of points. All four of our results listed above have already been 
proved in the special case of ordinary sheaves without the structure of a Hitchin pair.
(A coherent sheaf is the same as a $0$-cotwisted Hitchin pair.) For \autoref{thm:g2}, see 
\cite{Biswas-Nagaraj--reconstructioncurves} and \cite[Sect.\ 2]{Biswas-Nagaraj--reconstructionsurfaces}; for 
\autoref{thm:g1}, see \cite[Thm.\ 1.5]{Krug--reconstruction}; for \autoref{thm:rechigher}, see \cite[Thm.\ 3.2 
\& Rem.\ 3.5]{Krug--reconstruction}; for \autoref{thm:curvestab}, see \cite{Krug--stab}. In all cases, the 
proofs can be amended in such a way that they work for Hitchin pairs. We follow different approaches in 
explaining how to amend the proofs. For \autoref{thm:g2} and \autoref{thm:g1}, we give full proofs in 
\autoref{sect:reccurves}. The reader should be able to follow these proofs without going back to 
\cite{Biswas-Nagaraj--reconstructioncurves}, \cite{Biswas-Nagaraj--reconstructionsurfaces} or 
\cite{Krug--reconstruction}, though some steps of the arguments might be carried out in greater details in 
these articles. In contrast, for the proofs of \autoref{thm:rechigher} and \autoref{thm:curvestab}, we only 
explain the extra ingredients needed to lift the proofs from the cases of ordinary sheaves to Hitchin pairs, 
and where to insert these ingredients. Hence, for following these proofs, the reader should at the same time 
have a look at the relevant parts of \cite{Krug--reconstruction} and \cite{Krug--stab}.

In the final \autoref{subsect:surfacestab}, we remark that also a result of Stapleton \cite{Stapletontaut} on stability of tautological bundles on Hilbert schemes of points on surfaces extends to Hitchin bundles.

\section{Preliminaries on Hitchin pairs}\label{sect:Hitchin}

Throughout this section, let $X$ be a scheme over the complex numbers $\IC$. Usually, we will work with 
varieties, but at one point in \autoref{subsect:g2} we will have to consider Hitchin pairs on infinitesimal 
neighborhoods of a diagonal, which is why we choose the greater generality here.

\subsection{Basic Definitions}\label{subsect:co-pairs}

Let $V\in \VB(X)$ be a vector bundle. A \emph{$V$-cotwisted Hitchin pair on $X$} is a pair
$(E,\theta)$ consisting of a coherent sheaf $E\in \Coh(X)$ and an $\reg_X$-linear map $\theta\colon E\otimes V\to V$ such that the composition 
\[
\bigl(\theta\circ (\theta\otimes \id_V)\bigr)_{\mid \bigwedge\nolimits^2V}\colon E\otimes\bigwedge\nolimits^2V\hookrightarrow E\otimes V\otimes V\xrightarrow{\theta\otimes \id_V} E\otimes V\xrightarrow{\theta}E\,,
\]
where $\bigwedge^2V\subset V\otimes V$ is the second exterior product, vanishes. A Hitchin pair
$(E,\theta)$ where $E$ is a vector bundle is also called a \emph{Hitchin bundle}.

A \emph{morphism} between two $V$-cotwisted Hitchin pairs $(E,\theta)$ and $(F,\eta)$ on $X$ is an $\reg_X$-linear map $\gamma\colon E\to F$ such that the following diagram commutes
\[
 \xymatrix{
 E\otimes V\ar[d]_{\gamma\otimes \id_V} \ar[r]^{\theta} & E\ar[d]^{\gamma} \\
F\otimes V \ar[r]^{\eta} & F \,.
 }
\]
We denote the category of $V$-cotwisted Hitchin pairs on $X$ by $\Hi^V(X)$. It is an abelian category with 
kernels and cokernels given by the kernels and cokernels of the underlying morphisms of coherent sheaves,
equipped with the induced $\reg_X$-linear homomorphism. In 
other words, the forgetful functor $$\For\colon \Hi^V(X)\to \Coh(X)$$ is exact.

\begin{remark}\label{rem:cotwisttwist}
In the literature, usually \emph{$V$-twisted} Hitchin pairs are considered instead of $V$-cotwisted Hitchin pairs. A $V$-twisted Hitchin pair is a pair $(E,\zeta)$ consisting of $E\in \Coh(X)$ and an $\reg_X$-linear morphism $\zeta\colon E\to E\otimes V$ such that the composition 
\[
 E\xrightarrow{\zeta}E\otimes V\xrightarrow{\zeta\otimes \id_V} E\otimes V\otimes V\twoheadrightarrow E\otimes
\bigwedge\nolimits^2V
\]
vanishes. However, the category $\Hi_{V^\vee}(X)$ of $V^\vee$-twisted Hitchin pairs is equivalent to $\Hi^V(X)$ via the pair of mutually inverse exact functors
\begin{align*}
 &\Hi^V(X)\to \Hi_{V^\vee}(X)\,,\quad(E,\theta)\mapsto (E,\theta')\,,\quad \theta':=\bigl(E\xrightarrow{\id_E\otimes\eta_V}E\otimes V\otimes V^\vee\xrightarrow{\theta\otimes \id_{V^\vee}} E\otimes V^\vee\bigr)\,,\\
&\Hi_{V^\vee}(X)\to \Hi^V(X)\,,\quad(E,\zeta)\mapsto (E,\zeta')\,,\quad \zeta':=\bigl(E\otimes V\xrightarrow{\zeta\otimes \id_V}E\otimes V^\vee\otimes V\xrightarrow{\id_{E}\otimes \eps_V} E\bigr)\,,
\end{align*}
where $\eta_V\colon \reg_X\to V\otimes V^\vee$ sends any $f$ to
$f\cdot\id_V$, and $\eps_V\colon V^\vee\otimes V\to \reg_X$ is the trace map.
The reason that we prefer to work with $V$-cotwisted sheaves is that they allow an easier formulation of a push-forward functor (under certain circumstances); see \autoref{rem:copush}.

\end{remark}

\subsection{Hitchin subsheaves}

Let $(E,\theta)$ be a $V$-cotwisted Hitchin pair on $X$. A coherent subsheaf $A\subset E$ is called a 
\emph{Hitchin subsheaf} if $\theta(A\otimes V)\subset A$. A Hitchin subsheaf carries itself the structure of 
a $V$-cotwisted Hitchin pair $(A,\theta_{\mid A})$ which makes the inclusion $A\hookrightarrow E$ a morphism 
of Hitchin pairs. Furthermore, the quotient $E/A$ carries the structure of a Hitchin pair $(E/A,\bar\theta)$ 
such that the quotient map $E\twoheadrightarrow E/A$ is a morphism of Hitchin pairs.

For later use, we note the following quite obvious statements.
\begin{lemma}\label{lem:subsheaves}
Let $(E,\theta)$ be a Hitchin pair, and let $A\subset E$ be a Hitchin subsheaf.
\begin{enumerate}
 \item If $B\subset E$ is another Hitchin subsheaf, then the intersection $A\cap B\subset E$ is a Hitchin subsheaf too.
 \item Let $(F,\eta)$ be another Hitchin pair, and let $\phi\colon E\to F$ be a morphism of Hitchin pairs. Then $\phi(A)\subset F$ is a Hitchin subsheaf. 
\end{enumerate}
\end{lemma}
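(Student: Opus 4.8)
The plan is to prove the two assertions of \autoref{lem:subsheaves} directly from the definition of a Hitchin subsheaf, namely that $A \subset E$ is a Hitchin subsheaf of $(E,\theta)$ precisely when $\theta(A \otimes V) \subset A$. Each part reduces to a short set-theoretic (really subsheaf-theoretic) computation, so the goal is simply to chase the defining containment through the relevant maps. I expect no serious obstacle; the only point requiring a modicum of care is that all the inclusions and images must be understood as subsheaves rather than as sets of sections, but since these constructions (intersection, image) are well-defined at the level of coherent subsheaves and commute with the relevant tensor and pushforward operations, this is routine.

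For part (i), suppose $A, B \subset E$ are both Hitchin subsheaves. Since tensoring a subsheaf inclusion with the locally free sheaf $V$ is exact, we have $(A \cap B) \otimes V = (A \otimes V) \cap (B \otimes V)$ inside $E \otimes V$. Applying $\theta$ and using that $A, B$ are Hitchin subsheaves gives
\[
\theta\bigl((A \cap B) \otimes V\bigr) \subset \theta(A \otimes V) \cap \theta(B \otimes V) \subset A \cap B\,,
\]
where the first containment holds because $\theta$ of an intersection lies in the intersection of the images. Hence $A \cap B$ is a Hitchin subsheaf, as claimed.

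For part (ii), let $\phi \colon (E,\theta) \to (F,\eta)$ be a morphism of Hitchin pairs, so that the square $\eta \circ (\phi \otimes \id_V) = \phi \circ \theta$ commutes. We must show $\eta\bigl(\phi(A) \otimes V\bigr) \subset \phi(A)$. Again using exactness of $-\otimes V$, the inclusion $A \hookrightarrow E$ yields $\phi(A) \otimes V = (\phi \otimes \id_V)(A \otimes V)$ as a subsheaf of $F \otimes V$. Therefore
\[
\eta\bigl(\phi(A) \otimes V\bigr) = \eta\bigl((\phi \otimes \id_V)(A \otimes V)\bigr) = \phi\bigl(\theta(A \otimes V)\bigr) \subset \phi(A)\,,
\]
where the middle equality is the commutativity of the Hitchin-morphism square and the final containment uses that $A$ is a Hitchin subsheaf of $E$. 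This establishes that $\phi(A)$ is a Hitchin subsheaf of $(F,\eta)$, completing the proof.
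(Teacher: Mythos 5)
Your proof is correct, and it is precisely the routine verification the paper has in mind: the paper states this lemma without proof, calling it ``quite obvious.'' Your two uses of exactness of $-\otimes V$ (so that intersections and images of subsheaves are preserved under tensoring with the locally free sheaf $V$) are exactly the points that make the containment chase legitimate, so nothing is missing.
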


\subsection{Higgs bundles}

Let $X$ be a smooth variety with tangent bundle $\TT_X$. A \emph{Higgs bundle} on $X$ is a $\TT_X$-cotwisted Hitchin pair $(E,\theta)$.

Given a reduced divisor $D\subset X$, we consider the \emph{sheaf of logarithmic vector fields} $\TT_X(-\log 
D)$. It is defined as the sheaf of vector fields which along the regular locus $D_{\mathsf{reg}}$ of $D$ are 
tangent to $D$. A \emph{logarithmic Higgs bundle} (with respect to the divisor $D$) is a $\TT_X(-\log 
D)$-cotwisted Hitchin pair $(E,\theta)$.

Logarithmic Higgs bundles on curves were first considered by Bottacin \cite{Bo}. He proved that a
moduli space of logarithmic Higgs bundle on a curve has a natural Poisson structure.

We note that a logarithmic Higgs bundle is a parabolic Higgs bundle with trivial quasi-parabolic structure. Mochizuki has proved many important results on parabolic Higgs bundles \cite{Mo1}, \cite{Mo2}.

\subsection{Change of the cotwisting bundle}\label{subsect:change}

Let $\phi\colon V\to W$ be a morphism of vector bundles on $X$. There is an induced exact functor 
\[
\phi^\#\colon \Hi^W(X)\to \Hi^V(X)\quad,\quad(E,\theta)\mapsto \bigl(E,\theta\circ (\id_E\otimes \phi)\bigr)\,. 
\]

\begin{lemma}\label{lem:induced}
Let $\phi\colon V\to Q$ be a surjective morphism of vector bundles with kernel $K:=\ker(p)$.
\begin{enumerate}
 \item The functor $\phi^\#\colon \Hi^Q(X)\to \Hi^V(X)$ is fully faithful.
 \item A Hitchin pair $(E,\theta)\in \Hi^V(X)$ is in the essential image of the functor $\phi^\#$ if and only if $\theta(E\otimes K)=0$.
 \item Let $(E,\theta)\in \Hi^Q(X)$. A subsheaf $A\subset E$ is a Hitchin subsheaf of $(E,\theta)$ if and only if it is a Hitchin subsheaf of $\phi^\#(E,\theta)$.
\end{enumerate}
\end{lemma}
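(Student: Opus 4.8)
The plan is to exploit the short exact sequence $0\to K\to V\xrightarrow{\phi} Q\to 0$ of vector bundles and to translate the three statements into properties of $\reg_X$-linear maps, using everywhere the defining integrability (vanishing) condition on $\bigwedge^2$. Throughout I would keep in mind that, by the definition of $\Hi^V(X)$ in \autoref{subsect:co-pairs}, the forgetful functor $\For$ is exact and the underlying morphisms of coherent sheaves are unconstrained; the only structure to track is how the maps $\theta$ interact with the subbundle $K$ and the quotient $Q$.

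For (i), full faithfulness, I would observe that $\phi^\#$ does not change the underlying sheaf, so a morphism $\gamma\colon E\to F$ in $\Coh(X)$ is a morphism in $\Hi^V(X)$ between $\phi^\#(E,\theta)$ and $\phi^\#(F,\eta)$ exactly when the square with horizontal maps $\theta\circ(\id_E\otimes\phi)$ and $\eta\circ(\id_F\otimes\phi)$ commutes. Since $\phi\colon V\to Q$ is surjective, $\id_E\otimes\phi$ and $\id_F\otimes\phi$ are surjective, so the enlarged square commutes if and only if the original square with $\theta,\eta$ does; hence $\Hom_{\Hi^Q}\bigl((E,\theta),(F,\eta)\bigr)\to \Hom_{\Hi^V}\bigl(\phi^\#(E,\theta),\phi^\#(F,\eta)\bigr)$ is a bijection. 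This is the routine part.

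For (ii), the map $\phi^\#$ produces $\widetilde\theta:=\theta\circ(\id_E\otimes\phi)$, and $\widetilde\theta(E\otimes K)=\theta\bigl(E\otimes\phi(K)\bigr)=0$ since $\phi(K)=0$; this gives the ``only if'' direction. Conversely, given $(E,\theta)\in\Hi^V(X)$ with $\theta(E\otimes K)=0$, the map $\theta$ factors through $E\otimes(V/K)=E\otimes Q$, yielding a unique $\bar\theta\colon E\otimes Q\to E$ with $\theta=\bar\theta\circ(\id_E\otimes\phi)$. The step that needs genuine care is verifying that $(E,\bar\theta)$ lies in $\Hi^Q(X)$, i.e.\ that $\bar\theta$ satisfies the $\bigwedge^2 Q$-vanishing condition, and this is the main obstacle. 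The integrability condition for $\theta$ is vanishing on $E\otimes\bigwedge^2 V$, and I would deduce the condition for $\bar\theta$ by noting that the surjection $\phi$ induces a surjection $\bigwedge^2 V\twoheadrightarrow\bigwedge^2 Q$, so that the composition $\theta\circ(\theta\otimes\id_V)$ restricted to $E\otimes\bigwedge^2 V$ descends, after using $\theta=\bar\theta\circ(\id_E\otimes\phi)$ and chasing the relevant exterior-power diagram, to $\bar\theta\circ(\bar\theta\otimes\id_Q)$ on $E\otimes\bigwedge^2 Q$; the surjectivity of $\bigwedge^2\phi$ then forces the latter to vanish. Once this is in place, uniqueness of the factorization $\theta=\bar\theta\circ(\id_E\otimes\phi)$ shows $\phi^\#(E,\bar\theta)=(E,\theta)$.

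For (iii), I would use the factorization $\theta=\bar\theta\circ(\id_E\otimes\phi)$ together with $\widetilde\theta=\theta\circ(\id_E\otimes\phi)$ to relate the two invariance conditions directly. A subsheaf $A\subset E$ is Hitchin for $\phi^\#(E,\theta)$ iff $\widetilde\theta(A\otimes V)\subset A$, and $\widetilde\theta(A\otimes V)=\theta\bigl(A\otimes\phi(V)\bigr)=\theta(A\otimes Q)$ because $\phi$ is surjective; the right-hand side is precisely the image that appears in the invariance condition $\theta(A\otimes Q)\subset A$ defining a Hitchin subsheaf of $(E,\theta)$. Hence the two conditions coincide, and I would conclude. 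The only care here is to note that the image of $A\otimes V$ under $\id_A\otimes\phi$ equals $A\otimes Q$, which again follows from surjectivity of $\phi$.
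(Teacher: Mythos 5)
Your proposal is correct and follows essentially the same route as the paper: part (i) via surjectivity of $\id_E\otimes\phi$, part (ii) by factoring $\theta$ through $E\otimes Q$ and checking the integrability condition descends along the surjection $\bigwedge^2 V\twoheadrightarrow\bigwedge^2 Q$, and part (iii) by noting $(\id_A\otimes\phi)(A\otimes V)=A\otimes Q$. The paper merely declares (ii) straightforward and (iii) immediate from surjectivity; your write-up supplies exactly the details it leaves to the reader.
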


\begin{proof} Let $(E,\theta), (F,\eta)\in \Hi^Q(X)$. By the surjectivity of $\phi$, a $\reg_X$-linear 
morphism $\gamma\colon E\to F$ is a morphism between the $Q$-cotwisted Hitchin pairs $(E,\theta)$ and 
$(F,\eta)$ if and only if it is a morphism between the $V$-cotwisted Hitchin pairs $\phi^\#(E,\theta)$ and 
$\phi^\#(F,\eta)$, which proves (i). Part (ii) is straight-forward to prove.
Part (iii) 
follows directly from the surjectivity of $\phi$ and the definition of Hitchin subsheaves. \end{proof}

\subsection{Pull-back of Hitchin pairs}\label{subsect:pull}

Let $f\colon X'\to X$ be a morphism, and let $E$ and $V$ be coherent sheaves on $X$. There is a natural isomorphism
\[
\alpha:=\alpha_{f,E,V}\colon f^*E\otimes f^*V\xrightarrow\sim f^*(E\otimes V)\,. 
\]
Concretely, if $f\colon \Spec A'\to \Spec A$ is a morphism of affine schemes, so that $E=\widetilde M$, $V=\widetilde N$ for some $A$-modules $M$ and $N$, then $\alpha$ is given by the map
\begin{equation}\label{eq:alpha}
 (M\otimes_A A')\otimes_{A'}(N\otimes_A A')\to (M\otimes_A N)\otimes_A A'\quad,\quad (m\otimes a)\otimes (n\otimes b)\mapsto (m\otimes n)\otimes (a b)\,.
\end{equation}
Using the isomorphism $\alpha$, we can define the pull-back along $f$ on the level of Hitchin pairs as 
\[
 f^*\colon \Hi^V(X)\to \Hi^{f^*V}(X')\quad,\quad (E,\theta)\mapsto (f^*E, f^*\theta\circ \alpha)\,.
\]
We often omit the isomorphism $\alpha$ in our notation and simply write $f^*\theta\colon f^*E\otimes f^*V\to f^*E$. 

\subsection{Hitchin pairs under tensor products}\label{subsect:tensor}

Let $(E,\theta)\in \Hi^V(X)$ be a Hitchin pair, and let $P\in \Coh(X)$. We write $P\otimes(E,\theta)$ for the Hitchin pair $(P\otimes E,\id_P\otimes \theta)$. Clearly, this defines a functor
$ P\otimes \_\colon \Hi^V(X)\to \Hi^V(X)$.

\subsection{Push-forwards of Hitchin pairs}\label{subsect:push}

Let now $\pi\colon X\to Y$ be a morphism such that $\pi_*V$ is again a vector bundle. Note that this is always the case if $\pi$ is flat and finite. In the following, for every $(E,\theta)\in \Hi^V(X)$, we will naturally equip $\pi_*E$ with the structure of a $\pi_*V$-cotwisted Hitchin pair. There is a natural morphism 
\[
 \nu:=\nu_{\pi,E,V}\colon \pi_*E\otimes \pi_*V\to \pi_*(E\otimes V)\,.
\]
Over an open affine subset $\Spec A=U\subset X$, it is given by the $A$-linear cup product
\begin{align}\label{eq:nu}
 \Gamma(\pi^{-1}U, E)\otimes_A \Gamma(\pi^{-1} U, V)\xrightarrow\cup \Gamma(\pi^{-1} U , E\otimes V)\,. 
\end{align}

\begin{lemma}\label{lem:push}
Let $\pi\colon X\to Y$ be a morphism of schemes, let $(E,\theta)\in \Hi^V(X)$, and set 
\[
 \check \theta:=\pi_*\theta\circ \nu\colon \pi_*E\otimes \pi_*V\to \pi_*E\,.
\]
Then the following vanishing statement holds:
$$\bigl(\check\theta\circ(\check\theta\otimes \id_{\pi_*V})\bigr)_{\mid\pi_*E\otimes \bigwedge\nolimits^2 (\pi_*V)}=0\, .$$
\end{lemma}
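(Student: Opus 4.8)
The plan is to reinterpret the desired vanishing as a symmetry statement and then transport the symmetry of $\theta\circ(\theta\otimes\id_V)$ through the cup-product map $\nu$. Since we work over $\IC$, for any vector bundle $W$ a morphism out of $W\otimes W$ kills $\bigwedge^2 W$ exactly when it is invariant under the swap isomorphism $\tau_W\colon W\otimes W\to W\otimes W$. Thus the Hitchin condition on $(E,\theta)$ says precisely that $\Phi:=\theta\circ(\theta\otimes\id_V)\colon E\otimes V\otimes V\to E$ satisfies $\Phi\circ(\id_E\otimes\tau_V)=\Phi$, and the statement to prove is equivalent to $\check\Phi:=\check\theta\circ(\check\theta\otimes\id_{\pi_*V})$ being invariant under $\id_{\pi_*E}\otimes\tau_{\pi_*V}$.

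The structural input I would use is that the maps $\nu$ make $\pi_*$ a lax symmetric monoidal functor; concretely, $\nu$ is natural in both arguments, the two triple products $\nu_{E\otimes V,V}\circ(\nu_{E,V}\otimes\id_{\pi_*V})$ and $\nu_{E,V\otimes V}\circ(\id_{\pi_*E}\otimes\nu_{V,V})$ agree (call the common map $\nu^{(3)}\colon \pi_*E\otimes\pi_*V\otimes\pi_*V\to\pi_*(E\otimes V\otimes V)$), and $\nu_{V,V}\circ\tau_{\pi_*V}=\pi_*(\tau_V)\circ\nu_{V,V}$. Granting these, I first unfold $\check\theta=\pi_*\theta\circ\nu_{E,V}$ and apply naturality of $\nu$ along $\theta$ together with associativity to get $\check\Phi=\pi_*\Phi\circ\nu^{(3)}$. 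Next, combining the symmetry compatibility with naturality of $\nu$ along $\tau_V$ yields $\nu^{(3)}\circ(\id_{\pi_*E}\otimes\tau_{\pi_*V})=\pi_*(\id_E\otimes\tau_V)\circ\nu^{(3)}$, whence
\[
\check\Phi\circ(\id_{\pi_*E}\otimes\tau_{\pi_*V})=\pi_*\bigl(\Phi\circ(\id_E\otimes\tau_V)\bigr)\circ\nu^{(3)}=\pi_*\Phi\circ\nu^{(3)}=\check\Phi,
\]
the middle equality being the Hitchin condition $\Phi\circ(\id_E\otimes\tau_V)=\Phi$. This is exactly the sought symmetry, hence the vanishing on $\pi_*E\otimes\bigwedge^2(\pi_*V)$.

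Because the claim is local on $Y$ and the sheaves involved are quasi-coherent, I would in fact verify these coherences, and thereby the whole lemma, by restricting to an affine open $U=\Spec A\subset Y$, where by \eqref{eq:nu} the map $\nu$ is simply the $A$-bilinear tensoring of sections over $\pi^{-1}U$. On a simple tensor $m\otimes w$ one has $\check\theta(m\otimes w)=\theta(m\cup w)$, with $\theta$ applied sectionwise, so the double composition sends $m\otimes w_1\otimes w_2$ to $\theta\bigl(\theta(m\cup w_1)\cup w_2\bigr)$. The identity $\Phi\circ(\id_E\otimes\tau_V)=\Phi$ holds on all sections of $E\otimes V\otimes V$ over $\pi^{-1}U$, in particular on those of this form, giving
\[
\theta\bigl(\theta(m\cup w_1)\cup w_2\bigr)=\theta\bigl(\theta(m\cup w_2)\cup w_1\bigr)
\]
directly. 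By $A$-linearity this shows $\check\Phi$ annihilates every section of $\pi_*E\otimes\bigwedge^2(\pi_*V)$ over $U$, and ranging over an affine cover of $Y$ concludes.

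The only genuine subtlety, and the step I would treat most carefully, is the bookkeeping of the compatibility of $\nu$ with associativity and with the braiding $\tau$. In the abstract formulation this is the coherence of the lax symmetric monoidal structure on $\pi_*$; in the affine-local formulation it becomes automatic, since tensoring of sections is manifestly associative and symmetric, at the modest cost of justifying the reduction to an affine cover and to simple tensors $m\otimes(w_1\wedge w_2)$ (which is legitimate, as $\pi_*E\otimes\pi_*V$ is quasi-coherent and taking sections over $\Spec A$ commutes with tensor products of quasi-coherent sheaves). I would therefore present the local computation as the actual proof and record the lax monoidal viewpoint as an explanatory remark.
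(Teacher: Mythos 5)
Your proposal is correct and follows essentially the same route as the paper: the paper likewise rewrites $\check\theta\circ(\check\theta\otimes \id_{\pi_*V})$ as $\pi_*\bigl(\theta\circ(\theta\otimes\id_V)\bigr)\circ\nu_{E,V\otimes V}\circ(\id_{\pi_*E}\otimes \nu_{V,V})$ via naturality and associativity of the cup products, and then uses the affine description \eqref{eq:nu} of $\nu$ to see that $\pi_*E\otimes\bigwedge\nolimits^2(\pi_*V)$ is carried into $\pi_*(E\otimes\bigwedge\nolimits^2 V)$, where the Hitchin condition kills everything. Your affine-local computation on sections is exactly this argument; the only cosmetic difference is your optional characteristic-zero repackaging of the $\bigwedge\nolimits^2$-vanishing as swap-invariance, which the paper's containment argument does not need.
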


\begin{proof}
From the description \eqref{eq:nu} of $\nu$ over open affine subsets, we see that the diagram
\[
 \xymatrix{
 \pi_*(E\otimes V)\otimes \pi_*V\ar[r]^{\quad\nu_{E\otimes V,V}} \ar[d]_{\pi_*\theta\otimes \id_{\pi_*V}} & \pi_*(E\otimes V\otimes V) \ar[d]^{\pi_*(\theta\otimes \id_V)} \\
 \pi_*E\otimes \pi_*V\ar[r]^{\nu_{E,V}} & \pi_*(E\otimes V) 
 }
\]
commutes. Hence, we can rewrite the composition $\check\theta\circ(\check\theta\otimes \id_V)$ as
\begin{align}
 \check\theta\circ(\check\theta\otimes \id_V)&=(\pi_*\theta)\circ \nu_{E,V}\circ (\pi_*\theta\otimes\id_{\pi_*V})\circ(\nu_{E,V}\otimes \id_{\pi_*V})\notag\\&=(\pi_*\theta)\circ (\pi_*(\theta\otimes\id_V))\circ\nu_{E\otimes V,V}\circ(\nu_{E,V}\otimes \id_{\pi_*V})\notag
 \\&=(\pi_*(\theta\circ (\theta\otimes\id_V)))\circ\nu_{E,V\otimes V}\circ(\id_{\pi_*E}\otimes \nu_{V,V})\label{eq:push1}
 \,.
\end{align}
It follows from \eqref{eq:nu} that $\nu_{V,V}(\bigwedge^2 (\pi_*V))\subset \pi_*(\bigwedge^2V)$, hence
\begin{equation}\label{eq:push2}
\nu_{E,V\otimes V}(\id_{\pi_*E}\otimes \nu_{V,V})(\pi_*E\otimes \bigwedge\nolimits^2(\pi_*V))\subset \pi_*(E\otimes
\bigwedge\nolimits^2V)\,. 
\end{equation}
Since $(E,\theta)$ is a Hitchin pair, we have that $(\theta\circ(\theta\otimes\id_V))_{\mid E\otimes \bigwedge^2V}=0$, hence
\begin{equation}\label{eq:push3}
 (\pi_*(\theta\circ (\theta\otimes\id_V)))_{\mid \pi_*(E\otimes \bigwedge\nolimits^2V)}=0\,.
\end{equation}
The combination of \eqref{eq:push1}, \eqref{eq:push2}, and \eqref{eq:push3} gives the desired vanishing.
\end{proof}

\begin{cor}\label{cor:push}
 Let $\pi\colon X\to Y$ be a morphism of schemes, and let $V\in \VB(X)$ be a vector bundle with the property that $\pi_*V$ is again a vector bundle. Then there is a push-forward functor
 \[
\pi_*\colon \Hi^V(X)\to \Hi^{\pi_*V}(Y)\quad,\quad \pi_*(E,\theta)=(\pi_*E,\check\theta)\,.
 \]
In particular, if $\pi\colon X\to Y$ is flat and finite, there exists for every vector bundle $V\in \VB(X)$ a push-forward functor $\pi_*\colon \Hi^V(X)\to \Hi^{\pi_*V}(Y)$ . 
\end{cor}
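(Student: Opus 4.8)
The plan is to check that $(E,\theta)\mapsto(\pi_*E,\check\theta)$ extends to a functor $\Hi^V(X)\to\Hi^{\pi_*V}(Y)$, observing that the only substantive point---that $\check\theta$ obeys the Hitchin vanishing axiom---is precisely what \autoref{lem:push} delivers. Concretely, on objects I would note that \autoref{lem:push} establishes $\bigl(\check\theta\circ(\check\theta\otimes\id_{\pi_*V})\bigr)_{\mid\pi_*E\otimes\bigwedge\nolimits^2(\pi_*V)}=0$, which is exactly the defining condition from \autoref{subsect:co-pairs} for the $\reg_Y$-linear map $\check\theta\colon\pi_*E\otimes\pi_*V\to\pi_*E$. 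Combined with the hypothesis that $\pi_*V$ is a vector bundle---and with coherence of $\pi_*E$, which holds when $\pi$ is finite since finite morphisms preserve coherence---this places $(\pi_*E,\check\theta)$ in $\Hi^{\pi_*V}(Y)$.

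The main step is the behaviour on morphisms. Given a morphism $\gamma\colon(E,\theta)\to(F,\eta)$ in $\Hi^V(X)$, I would show that $\pi_*\gamma\colon\pi_*E\to\pi_*F$ satisfies the identity $\pi_*\gamma\circ\check\theta=\check\eta\circ(\pi_*\gamma\otimes\id_{\pi_*V})$, i.e.\ that it is a morphism $(\pi_*E,\check\theta)\to(\pi_*F,\check\eta)$ in $\Hi^{\pi_*V}(Y)$. The key input is the \emph{naturality of $\nu$ in its sheaf argument}: reading off the affine cup-product formula \eqref{eq:nu}, one sees that $\nu_{F,V}\circ(\pi_*\gamma\otimes\id_{\pi_*V})=\pi_*(\gamma\otimes\id_V)\circ\nu_{E,V}$. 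Composing this naturality square with $\pi_*$ applied to the defining commuting square of $\gamma$ (which reads $\gamma\circ\theta=\eta\circ(\gamma\otimes\id_V)$, hence $\pi_*\gamma\circ\pi_*\theta=\pi_*\eta\circ\pi_*(\gamma\otimes\id_V)$), and substituting $\check\theta=\pi_*\theta\circ\nu_{E,V}$ and $\check\eta=\pi_*\eta\circ\nu_{F,V}$, yields the desired identity.

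Functoriality is then automatic: since the forgetful functor $\For\colon\Hi^{\pi_*V}(Y)\to\Coh(Y)$ is faithful and the underlying sheaf assignment $E\mapsto\pi_*E$ already preserves identities and composition, the same holds at the level of Hitchin pairs. For the final assertion I would simply recall that a flat finite morphism has $\pi_*V$ locally free of finite rank for every $V\in\VB(X)$, as noted just before \autoref{lem:push}, so the general statement applies verbatim and in particular $\pi_*E$ is coherent.

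I do not expect a genuine obstacle here: the one delicate computation---the quadratic vanishing---is already isolated in \autoref{lem:push}, and what remains is bookkeeping. The only place demanding care is the morphism step, where one must invoke naturality of the cup product $\nu$ in the correct (sheaf) variable and keep the identifications $\check\theta=\pi_*\theta\circ\nu_{E,V}$, $\check\eta=\pi_*\eta\circ\nu_{F,V}$ straight while chaining the two commuting squares.
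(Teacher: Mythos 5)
Your proposal is correct and matches the paper's approach: the paper states \autoref{cor:push} with no separate proof, treating it as an immediate consequence of \autoref{lem:push}, which is exactly the role you assign to that lemma. The additional details you supply---naturality of $\nu$ in the sheaf argument for the morphism-level check, and the coherence remark in the finite case---are precisely the routine bookkeeping the paper leaves implicit, carried out correctly.
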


\begin{remark}\label{rem:copush}
\autoref{cor:push} is the reason that we are working with cotwisted instead of twisted Hitchin pairs; compare \autoref{rem:cotwisttwist}. Indeed, in terms of twisted Hitchin pairs, the push-forward is a functor $\Hi_V(X)\to \Hi_{(\pi_*(V^\vee))^\vee}(Y)$, which makes it a bit inconvenient to formulate things in terms of twisted Hitchin pairs whenever a push-forward occurs. 
\end{remark}

Given a Cartesian diagram of schemes
\begin{equation}\label{diag:cart}
 \xymatrix{
 X'\ar[r]^{f'} \ar[d]_{\pi'} & X \ar[d]^{\pi} \\
 Y'\ar[r]^{f} & Y 
 }
\end{equation}
and a coherent sheaf $E\in \Coh(X)$, we denote the natural base change morphism by 
\[
\beta:=\beta_{E}\colon f^*\pi_*E\to \pi'_*f'^*E\,. 
\]
In the case that all the schemes involved are affine, which means that \eqref{diag:cart} is the spectrum of a diagram of commutative rings of the form 
\begin{equation}\label{diag:cartaffine}
 \xymatrix{
 B' & \ar[l] B \\
 A' \ar[u]_{\phi'} & \ar[l] A \ar[u]^{\phi} \,, 
 }
\end{equation}
with $E=\widetilde M$ and $V=\widetilde N$ for some $B$-modules $M$ and $N$, the map $\beta$ is given by
\begin{equation}\label{eq:beta}
 (M_A)\otimes_A A'\to (M\otimes_B B')_{A'} \quad,\quad m\otimes a\mapsto m\otimes \phi'(a)\,.
\end{equation}

\begin{lemma}\label{lem:basechange}
Let $\pi$ be flat and finite, and let $(E,\theta)\in \Hi^V(X)$. Then 
\[
 \beta_E\colon f^*\pi_*(E,\theta)\xrightarrow\sim \beta_V^\# \pi'_*f'^*(E,\theta)
\]
is an isomorphisms of $f^*\pi_*V$-cotwisted Hitchin pairs on $Y'$.
\end{lemma}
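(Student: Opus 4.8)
The plan is to reduce the statement to the single compatibility of $\beta_E$ with the two induced cotwisted Higgs fields. Indeed, $\beta_E$ is already known to be an isomorphism of the underlying coherent sheaves: since $\pi$ is finite (hence affine) and $\pi_*V$ is a vector bundle, the Cartesian identity $B'=B\otimes_A A'$ gives a canonical identification $f^*\pi_*E=M\otimes_A A'=\pi'_*f'^*E$ which is precisely $\beta_E$. Both $f^*\pi_*(E,\theta)$ and $\beta_V^\#\pi'_*f'^*(E,\theta)$ are $f^*\pi_*V$-cotwisted Hitchin pairs (note $\beta_V$ is an isomorphism by the same reasoning, so $\beta_V^\#$ applies), and a morphism of such pairs is just an $\reg_{Y'}$-linear map commuting with the two Higgs fields. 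So it remains to verify the commutativity of one square; invertibility of $\beta_E$ then promotes it to an isomorphism. As this is an equality of sheaf maps on $Y'$, I would work locally and reduce to the affine situation \eqref{diag:cartaffine}, writing $E=\widetilde M$, $V=\widetilde N$ for $B$-modules $M,N$.

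Spelling out the definitions, the Higgs field of $f^*\pi_*(E,\theta)$ equals $f^*(\pi_*\theta)\circ f^*\nu\circ\alpha$, while that of $\beta_V^\#\pi'_*f'^*(E,\theta)$ equals $\pi'_*(f'^*\theta\circ\alpha')\circ\nu'\circ(\id\otimes\beta_V)$, where $\alpha,\alpha'$ are the tensor-compatibility isomorphisms for $f,f'$ and $\nu,\nu'$ the cup products for $\pi,\pi'$. Thus compatibility of $\beta_E$ amounts to the identity
\[
\beta_E\circ f^*(\pi_*\theta)\circ f^*\nu\circ\alpha=\pi'_*(f'^*\theta\circ\alpha')\circ\nu'\circ(\id\otimes\beta_V)\circ(\beta_E\otimes\id)\,.
\]
I would prove this by factoring through the base change morphism $\beta_{E\otimes V}$ of the sheaf $E\otimes V$, using two intermediate compatibilities. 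The first is the naturality of $\beta$ in the sheaf argument applied to $\theta\colon E\otimes V\to E$, giving $\beta_E\circ f^*(\pi_*\theta)=\pi'_*(f'^*\theta)\circ\beta_{E\otimes V}$; this is immediate from the formula \eqref{eq:beta}. The second, which is the \emph{technical core}, is the compatibility of base change with the cup product, namely
\[
\beta_{E\otimes V}\circ f^*\nu\circ\alpha=\pi'_*(\alpha')\circ\nu'\circ(\id\otimes\beta_V)\circ(\beta_E\otimes\id)\,.
\]
Granting these two identities, the desired equality follows by a one-line substitution, since $\pi'_*(f'^*\theta)\circ\pi'_*(\alpha')=\pi'_*(f'^*\theta\circ\alpha')$.

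I expect the main obstacle to be the verification of this last cup-product compatibility. The difficulty is purely one of bookkeeping: the two sides involve tensor products over the four rings $A,B,A',B'$, and one must invoke both the flatness of $\pi$ (so that $\nu$ is the honest multiplication $M_A\otimes_A N_A\to(M\otimes_B N)_A$ and $\pi_*$ is exact) and the Cartesian identity $B'=B\otimes_A A'$ to identify the common target $\pi'_*f'^*(E\otimes V)$. Once all modules are identified, I would simply chase a general generator $(m\otimes a)\otimes(n\otimes b)$ through both composites; using the explicit descriptions \eqref{eq:alpha}, \eqref{eq:nu}, \eqref{eq:beta}, both routes produce the element $(m\otimes n)\otimes\phi'(ab)$, which finishes the verification. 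Everything else is formal.
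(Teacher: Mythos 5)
Your proof is correct and takes essentially the same route as the paper's: both reduce the statement to checking that the sheaf isomorphism $\beta_E$ intertwines the two induced cotwisted fields, pass to the affine situation \eqref{diag:cartaffine}, and verify via the explicit formulas \eqref{eq:alpha}, \eqref{eq:nu}, \eqref{eq:beta} that both composites send $(m\otimes a)\otimes(n\otimes b)$ to $\theta(m\otimes n)\otimes\phi'(ab)$. Your factorization through $\beta_{E\otimes V}$ (naturality of $\beta$ in the sheaf argument plus compatibility of base change with the cup product) is merely a more structured bookkeeping of the single element chase that the paper performs on the full diagram at once.
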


\begin{proof}
By the assumptions on $\pi$, the map $\beta_{E}\colon f^*\pi_*E\to \pi'_*f'^*E$ is an isomorphism of coherent sheaves. It remains to proof that it is a morphism of Hitchin pairs, which amounts to checking the commutativity of the diagram
\[
 \xymatrix{
f^*\pi_* E\otimes f^*\pi_*V \ar[rr]^{\alpha} \ar[d]_{\beta_E\otimes \id} & & f^*(\pi_* E\otimes \pi_*V) \ar[r]^{f^*\nu} & f^*\pi_*( E\otimes V)\ar[r]^{f^*\pi_*\theta} & f^*\pi_* E \ar[d]^{\beta_E} \\
 \pi'_*f'^*E\otimes f^*\pi_* V \ar[r]^{\id\otimes \beta_V} & \pi'_*f'^*E\otimes \pi'_*f'^*V \ar[r]^{\nu} & \pi'_*(f'^*E\otimes f'^*V) \ar[r]^{\pi'_*\alpha} & \pi'_*f'^*(E\otimes V) \ar[r]^{\pi'_*f'^*\theta} & \pi'_*f'^*E \,.
 }
\]
For this, we may assume that \eqref{diag:cart} is given by the spectrum of \eqref{diag:cartaffine}, 
$E=\widetilde M$, and $V=\widetilde N$. Then, using the concrete descriptions \eqref{eq:alpha}, \eqref{eq:nu}, and \eqref{eq:beta} of the maps $\alpha$, $\nu$, and $\beta$,
it can be checked that both paths of the above diagram are given by 
\[
 (M_A\otimes_A A')\otimes_{A'}(N_A\otimes_A A')\to (M\otimes_BB')_{A'}\quad,\quad (m\otimes a_1)\otimes
(n\otimes a_2)\mapsto \theta(m\otimes n)\otimes \phi'(a_1a_2)\,.\qedhere
\]
\end{proof}

Later, we usually omit the equivalence $\beta^\#\colon \Hi^{f^*\pi_*V}(Y')\xrightarrow \sim \Hi^{\pi_*'f'^*V}(Y')$ when applying \autoref{lem:basechange}, and simple write $f^*\pi_*(E,\theta)\cong \pi'_*f'^*(E,\theta)$.

\subsection{Fourier--Mukai transforms of Hitchin pairs}\label{subsect:FM}

Let $X$ and $Y$ be varieties, and let $Z\subset X\times Y$ be a closed sub-scheme. We denote by $\pr_X\colon X\times Y\to X$ and $\pr_Y\colon X\times Y\to Y$ the projections to the factors, and write $p\colon Z\to X$ and $q\colon Z\to Y$ for the restrictions of these projections to $Z$. In the following, we will always assume that $q\colon Z\to Y$ is flat and finite. For a coherent sheaf $\cP\in \Coh(Z)$, we define the \emph{Hitchin enhanced Fourier--Mukai transform}
$\Phi^{Z}_{\cP}\colon \Hi^V(X)\to \Hi^{q_*p^*V}(Y)$ as the following composition of the three functors discussed in Subsections \ref{subsect:pull}, \ref{subsect:tensor}, and \ref{subsect:push}:
\[
 \Phi^{Z}_{\cP}\colon \Hi^V(X)\xrightarrow{p^*}\Hi^{p^*V}(Z)\xrightarrow{\cP\otimes}\Hi^{p^*V}(Z)\xrightarrow{q_*}\Hi^{q_*p^*V}(Y)\,.
\]

\begin{remark}
Note that, in contrast to the usual convention for Fourier--Mukai transforms, none of our functors are derived. 
Hence, the functor $\Phi^{Z}_{\cP}\colon \Hi^V(X)\to\Hi^{q_*p^*V}(Y)$ needs not to be exact. However, it is exact whenever $\cP$ is flat over $X$, as will be the case in all our applications; compare \cite[Thm.\ 1.1]{Krug--reconstruction}. In this case, the Hitchin enhanced Fourier--Mukai transform is compatible with the usual Fourier--Mukai transform $\Phi_\cP\colon \Coh(X)\to \Coh(Y)$ in the sense that we have
\[
\For_Y\circ \Phi^Z_{\cP}\cong \Phi_\cP\circ \For_X \,,
\]
where $\For_Y\colon \Hi^{q_*p^*V}(Y)\to \Coh(Y)$ and $\For_X\colon \Hi^V(X)\to \Coh(X)$ are the forgetful functors. 
\end{remark}

\begin{example}\label{ex:FMpull}
Let $f\colon Y\to X$ be a morphism, and let $\Gamma_f=(f\times \id_Y)(Y)\subset X\times Y$ be its graph. Then, for every $L\in \Coh(Y)$ and $(E,\theta)\in \Hi^V(X)$, we have a natural isomorphism $\Phi^{\Gamma_f}_{(f\times \id_Y)_*L}\cong f^*(E,\theta)\otimes L$. In particular, $\Phi^{\Gamma_f}_{\reg_{\Gamma_f}}(E,\theta)\cong f^*(E,\theta)$.
\end{example}

\begin{lemma}\label{lem:ses}
Let $0\to \cP'\to \cP\to \cP''\to 0$ be a short exact sequence of coherent sheaves on $Z$. Then, for every Hitchin bundle $(E,\theta)\in \Hi^V(X)$, there is the following short exact sequence in $\Hi^{q'_*p'^*V}(Y)$:
\[
0\to\Phi^Z_{\cP'}(E,\theta)\to \Phi^Z_{\cP}(E,\theta)\to \Phi^Z_{\cP''}(E,\theta)\,. 
\]
\end{lemma}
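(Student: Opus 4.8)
The plan is to process the short exact sequence $0\to\cP'\to\cP\to\cP''\to0$ through the three functors $p^*$, $\cP\otimes\_$, $q_*$ that define $\Phi^Z_\bullet$, keeping the fixed pair $(E,\theta)$ in the argument, and to observe that under the hypothesis that $(E,\theta)$ is a Hitchin \emph{bundle} every step preserves exactness. Since the forgetful functor $\For_Y$ is exact and faithful (\autoref{subsect:co-pairs}), I intend to test exactness on the underlying coherent sheaves, while verifying separately that each arrow is a morphism of Hitchin pairs.

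First I would pull $(E,\theta)$ back to $p^*(E,\theta)=(p^*E,p^*\theta)\in\Hi^{p^*V}(Z)$ as in \autoref{subsect:pull}. Because $E$ is locally free, so is $p^*E$, and in particular it is flat; hence tensoring the given sequence with $p^*E$ stays exact, yielding
\[
0\to \cP'\otimes p^*E\to \cP\otimes p^*E\to \cP''\otimes p^*E\to 0
\]
on $Z$. By \autoref{subsect:tensor} each term carries the Hitchin field $\id\otimes p^*\theta$, and the structure maps of the sequence, say $\psi$, act only on the $\cP$-factor, i.e. have the form $\psi\otimes\id_{p^*E}$; such maps commute with $\id\otimes p^*\theta$, so the display is a short exact sequence in $\Hi^{p^*V}(Z)$.

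Next I would apply the push-forward $q_*\colon\Hi^{p^*V}(Z)\to\Hi^{q_*p^*V}(Y)$ furnished by \autoref{cor:push}. As $q$ is finite, hence affine, the underlying sheaf push-forward is exact, so $q_*$ sends the above sequence to a short exact sequence in $\Hi^{q_*p^*V}(Y)$; by the definition of $\Phi^Z_\bullet$ its terms are exactly $\Phi^Z_{\cP'}(E,\theta)$, $\Phi^Z_{\cP}(E,\theta)$, and $\Phi^Z_{\cP''}(E,\theta)$, which gives the asserted sequence (in fact with right-exactness to spare). I do not anticipate a genuine obstacle: both potentially problematic operations are exact under the running hypotheses---the tensor product because $p^*E$ is locally free and the push-forward because $q$ is finite---so the only point requiring care is the bookkeeping that all maps respect the Hitchin fields, which is immediate from their factorization through the $\cP$-factor.
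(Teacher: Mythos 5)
Your proof is correct and takes essentially the same route as the paper's: the paper's one-sentence argument likewise rests on exactness of tensoring with the locally free sheaf $p^*E$ together with the fact that (short) exact sequences of Hitchin pairs are exactly those whose underlying sequences of coherent sheaves are exact, with exactness of $q_*$ for the finite morphism $q$ left implicit. Your extra bookkeeping---that the connecting maps act only on the kernel factor and hence commute with the Hitchin fields, and that one even gets surjectivity on the right---is just a careful expansion of what the paper compresses.
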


\begin{proof}
 This follows from the fact that tensor products by vector bundles are exact, together with the fact that a short exact sequence of Hitchin pairs is given by a short exact sequence of the underlying coherent sheaves.
\end{proof}

Let now $i\colon Z'\hookrightarrow Z$ be a closed embedding, and let 
\[p'=p\circ i\colon Z'\to X\quad,\quad q'=q\circ i\colon Z'\to Y\]
be the restrictions of the projections to $Z'$. The canonical surjection $\reg_{Z}\to \reg_{Z'}$ induces a surjection $\phi\colon q_*p^*V\to q_*'p'^*V$. Recall that this yields a functor 
$\phi^\#\colon \Hi^{q_*'p'^*V}(X)\to \Hi^{q^*p_*V}(X)$; see \autoref{subsect:change}. 

\begin{lemma}\label{lem:FMinduced}
For $\cQ\in \Coh(Z')$ there is an isomorphism of functors
$\Phi_{i_*\cQ}^Z\cong \phi^\#\circ \Phi_{\cQ}^{Z'}$.
\end{lemma}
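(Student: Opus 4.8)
The plan is to unravel both sides of the claimed isomorphism $\Phi_{i_*\cQ}^Z\cong \phi^\#\circ \Phi_{\cQ}^{Z'}$ as composites of the three elementary functors (pull-back, tensor, push-forward) and check that they agree both on underlying coherent sheaves and on the cotwisting structure. On underlying sheaves, this is just the standard compatibility of Fourier--Mukai kernels with pushing forward along the closed embedding $i\colon Z'\hookrightarrow Z$, so the content is entirely in the Hitchin field $\theta$. I would begin by fixing $(E,\theta)\in \Hi^V(X)$ and writing out $\Phi_{i_*\cQ}^Z(E,\theta)=q_*\bigl(i_*\cQ\otimes p^*(E,\theta)\bigr)$ and $\Phi_{\cQ}^{Z'}(E,\theta)=q'_*\bigl(\cQ\otimes p'^*(E,\theta)\bigr)$, keeping the induced cotwisting bundles $q_*p^*V$ and $q'_*p'^*V$ in sight.

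The central geometric input is the projection formula for the closed embedding $i$, giving a natural isomorphism $i_*\cQ\otimes p^*(E,\theta)\cong i_*\bigl(\cQ\otimes i^*p^*(E,\theta)\bigr)=i_*\bigl(\cQ\otimes p'^*(E,\theta)\bigr)$, together with $q_*i_*=q'_*$ since $q'=q\circ i$. First I would verify that the projection formula isomorphism is compatible with the Hitchin structures, i.e.\ that it is an isomorphism of $p^*V$-cotwisted pairs once one restricts the cotwisting along the surjection $\reg_Z\to \reg_{Z'}$; this is where the functor $\phi^\#$ enters, since $i_*$ of a $p'^*V$-pair naturally carries a $p^*V$-structure that factors through $i_*p'^*V$. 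Concretely, the Hitchin field on $i_*\cQ\otimes p^*(E,\theta)$ is obtained by tensoring $\id_{i_*\cQ}$ with $p^*\theta$ and then using $\nu$, and I would check on an open affine cover, using the explicit module-level formulas \eqref{eq:alpha} and \eqref{eq:nu}, that this matches the field produced on the other side.

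Pushing this down by $q_*$ and invoking the definition of $\phi^\#$ from \autoref{subsect:change}, the point is that the surjection $\phi\colon q_*p^*V\to q'_*p'^*V$ is exactly the one induced by $\reg_Z\to \reg_{Z'}$, which is the surjection named just before the statement; so precomposing the $q'_*p'^*V$-field of $\Phi_{\cQ}^{Z'}(E,\theta)$ with $\id\otimes\phi$ reproduces the $q_*p^*V$-field of $\Phi_{i_*\cQ}^Z(E,\theta)$. I expect the main obstacle to be purely bookkeeping: tracking the cotwisting bundle through the three functors and confirming that the single surjection $\phi$ accounts for the difference between $q_*p^*V$ and $q'_*p'^*V$, rather than some composite of several change-of-twist maps. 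Since everything is built from the natural transformations $\alpha$, $\nu$, and the projection formula, all of which are local and given by explicit formulas, the verification reduces to a diagram chase over an affine cover that I would carry out using \eqref{eq:nu}; no derived functors or spectral sequences are involved because, as remarked, none of these functors are derived.
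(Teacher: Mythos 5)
Your proposal is correct and follows essentially the same route as the paper, whose proof is exactly the projection formula for $i\colon Z'\hookrightarrow Z$ (with $q_*i_*=q'_*$) combined with the change-of-cotwisting formalism of \autoref{lem:induced} applied to the surjection $\phi\colon q_*p^*V\to q'_*p'^*V$ induced by $\reg_Z\twoheadrightarrow\reg_{Z'}$. The only difference is one of presentation: where you verify the compatibility of the Hitchin fields by an explicit affine-local chase via \eqref{eq:alpha} and \eqref{eq:nu}, the paper absorbs this bookkeeping into the cited lemma and states the proof in one line.
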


\begin{proof}
This follows by the projection formula for the embedding $i\colon Z'\hookrightarrow Z$, together with \autoref{lem:induced}. 
\end{proof}

Let now $f\colon T\to Y$ be a morphism of schemes, and let $Z_T:=T\times_Y Z\subset T\times X$ be the fiber product such that we have a Cartesian diagram 
\begin{equation}\label{diag:cart2}
 \xymatrix{
 Z_T\ar[r]^{f'} \ar[d] & Z \ar[d] \\
 T\ar[r]^{f} & Y\,. 
 }
\end{equation}

\begin{lemma}\label{FMbasechange}
There is an isomorphism of functors $f^*\circ \Phi^Z_{\cP}\cong \Phi^{Z_T}_{f'^*\cP}$. 
\end{lemma}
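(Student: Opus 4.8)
The plan is to prove the base-change isomorphism $f^*\circ \Phi^Z_{\cP}\cong \Phi^{Z_T}_{f'^*\cP}$ by unwinding both sides into their three constituent functors (pull-back along the top projection, tensoring by the kernel, push-forward along the right projection) and then matching them step by step using the compatibilities already established in the excerpt. Let me write out the base-change diagram explicitly. Setting $p_T\colon Z_T\to T$ and $q_T\colon Z_T\to X$ for the two projections from $Z_T$, the Cartesian square \eqref{diag:cart2} together with the projections assemble into a commutative cube relating $Z$, $Z_T$, $Y$, $T$, $X$. The key structural point is that $q_T = q\circ f'$ (or, more precisely, that $f'$ covers $q$ in the appropriate sense), so that $f'^*p^* V \cong p_T^* (f^* V)$ canonically via the isomorphism $\alpha$ of \autoref{subsect:pull}.

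First I would treat the pull-back and tensor steps, which are the easy ones. Since pull-back is functorial, $f'^*p^* \cong p_T^*$ as functors on coherent sheaves, and on the level of Hitchin pairs this identification is exactly the natural isomorphism $\alpha$ that makes pull-back of Hitchin pairs well-defined; so $f'^*p^*(E,\theta)\cong p_T^*(E,\theta)$ after the canonical identification of cotwisting bundles. For the tensor step, the projection-formula-free fact is simply that $f'^*(\cP\otimes p^*E)\cong f'^*\cP\otimes f'^*p^*E\cong f'^*\cP\otimes p_T^*E$, again compatibly with the Hitchin structures because tensoring by a sheaf (\autoref{subsect:tensor}) only acts on the underlying sheaf and leaves the twist-map in place up to the same canonical isomorphisms.

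The substantive step is the commutation of $f^*$ with the push-forward $q_*$, and this is where I would invoke \autoref{lem:basechange}. Applied to the Cartesian square
\[
 \xymatrix{
 Z_T\ar[r]^{f'} \ar[d]_{p_T} & Z \ar[d]^{q} \\
 T\ar[r]^{f} & Y
 }
\]
— which is the relevant square since $q\colon Z\to Y$ is flat and finite by our standing hypothesis, hence so is its base change $p_T\colon Z_T\to T$ — \autoref{lem:basechange} gives an isomorphism $f^*q_*(\cG,\psi)\cong (p_T)_*f'^*(\cG,\psi)$ of Hitchin pairs, for any $(\cG,\psi)\in\Hi^{p^*V}(Z)$, compatible with the induced identification $\beta_{p^*V}^\#$ of cotwisting bundles. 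Taking $(\cG,\psi)=\cP\otimes p^*(E,\theta)$ and composing the three identifications above yields the desired natural isomorphism $f^*\Phi^Z_\cP(E,\theta)\cong \Phi^{Z_T}_{f'^*\cP}(E,\theta)$.

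The main obstacle I expect is purely bookkeeping: one must verify that the three separate natural isomorphisms — the pull-back $\alpha$'s, the tensor-compatibility, and the base-change $\beta$ of \autoref{lem:basechange} — are compatible with one another on the cotwisting bundles, so that the composite lands in $\Hi^{q_{T*}q_T^*V}(T)$ and agrees with the cotwist prescribed by the definition of $\Phi^{Z_T}_{f'^*\cP}$. In other words, the content is tracking the various identifications of the twisting vector bundle (from $f^*q_*p^*V$ through $\beta^\#$ to $(p_T)_*f'^*p^*V$ to $(p_T)_*p_T^*f^*V$) and checking they are the canonical ones; once these identifications are made explicit on affine patches via \eqref{eq:alpha}, \eqref{eq:nu}, and \eqref{eq:beta}, the underlying-sheaf statement is the classical flat base change for Fourier--Mukai transforms, and the Hitchin-field statement reduces to the same affine computation already carried out in the proof of \autoref{lem:basechange}. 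I would therefore present the proof as a short reduction to \autoref{lem:basechange}, relegating the naturality of $\alpha$ and of the tensor-compatibility to routine checks.
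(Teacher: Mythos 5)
Your proof is correct and is essentially the paper's own argument: the paper proves this lemma by a one-line appeal to \autoref{lem:basechange}, which is exactly the reduction you carry out (the pull-back and tensor steps being canonical identifications, with the substantive base change applied to the flat and finite projection $q\colon Z\to Y$). The only blemishes are notational slips in your setup paragraph: with your labels, the projection $q_T\colon Z_T\to X$ satisfies $q_T=p\circ f'$ (not $q\circ f'$), and the identification should read $f'^*p^*V\cong q_T^*V$ rather than $p_T^*(f^*V)$, which does not typecheck since $V$ lives on $X$ while $f\colon T\to Y$.
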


\begin{proof}
This follows from \autoref{lem:basechange}.
\end{proof}

\subsection{Stability of Hitchin pairs}

For this subsection, let $X$ be a smooth projective variety of dimension $n$. We fix an ample numerical class $H\in \NN^1(X)$. 
For every sheaf $A\in \Coh(X)$, we define its \emph{$H$-degree} and its \emph{$H$-slope} by 
\[
 \deg_H(A)=\int_X \cc_1(A)\cdot H^{n-1}\quad,\quad \mu_H(A)=\frac{\deg_H(A)}{\rank(A)}\,.
\]
A Hitchin bundle $(E,\theta)\in \Hi^V(X)$ is called \emph{($H$-slope) stable} if for every Hitchin subsheaf $A\subset E$ with $\rank(A)<\rank(E)$, we have $\mu(A)<\mu(E)$. It is called \emph{($H$-slope) semi-stable} 
if for every Hitchin subsheaf $A\subset E$, we have $\mu(A)\le \mu(E)$.

Clearly, if $E$ is (semi-)stable as an ordinary sheaf, the Hitchin pair $(E,\theta)$ is (semi-)stable too. However, the converse is not true. 

\begin{example}\label{ex:stab}
Let $X$ be a smooth projective curve, and let $L\in \Pic X$ be a line bundle of degree $-1$. We set $E:=\reg_X\oplus L$. Then $E$ is not stable and has exactly one destabilizing subsheaf, namely the direct summand $\reg_X$. Now, let 
\[
 \theta\colon E\otimes L\cong L\oplus L^{\otimes 2}\xrightarrow{\begin{pmatrix}
 0& 0\\\id_L&0
 \end{pmatrix}} \reg_X\oplus L\cong E\,,
\]
and consider the $L$-cotwisted Hitchin pair $(E,\theta)$. Then, the subbundle $\reg_X\subset E$ is not a Hitchin subbundle, hence $(E,\theta)$ is stable. 
\end{example}

\begin{remark}\label{rem:stabletwisted}
Let $(E,\theta)\in \Hi^V(X)$ be stable (or semi-stable), and let $L\in \Pic X$. Then $(E,\theta)\otimes L$ is again stable (or semi-stable). 
\end{remark}

\begin{lemma}\label{lem:inducedstab}
Let $\phi\colon V\to Q$ be a surjective morphism of vector bundles on $X$. A Hitchin pair $(E,\theta)\in \Hi^Q(X)$ is (semi-)stable if and only if $\phi^\#(E,\theta)\in \Hi^V(X)$ is (semi-)stable. 
\end{lemma}

\begin{proof}
 By \autoref{lem:induced}, $A\subset E$ is a Hitchin subsheaf of $(E,\theta)$ if and only if it is a Hitchin subsheaf of $\phi^\#(E,\theta)$. The assertion now follows from the fact that the slope of a Hitchin pair is defined as the slope of the underlying sheaf.
\end{proof}

\begin{prop}\label{prop:HNfilt}
For every $(E,\theta)\in \Hi^V(X)$, there is a unique filtration, called the \emph{Harder--Narasimhan filtration} of $(E,\theta)$, by Hitchin subsheaves
\[
E=\FF^0E\supset \FF^1 E\supset \dots\supset \FF^{m-1}E\supset \FF^{m}E=0 
\]
such that all the successive
quotients $(\FF^i E/\FF^{i+1} E,\bar\theta)$ are semi-stable Hitchin pairs with 
$\mu(\FF^i E/\FF^{i+1} E)<\mu(\FF^{i+1} E/\FF^{i+2} E)$ for every $i=0,\dots, m-2$.
\end{prop}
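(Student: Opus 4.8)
The plan is to reproduce the classical construction of the Harder--Narasimhan filtration inside the abelian category $\Hi^V(X)$, the point being that every ingredient of the usual proof for coherent sheaves is available here. The decisive structural facts are: $\Hi^V(X)$ is abelian with the forgetful functor $\For\colon\Hi^V(X)\to\Coh(X)$ exact, so that $\rank$ and $\deg_H$, being additive on $\Coh(X)$ and computed on underlying sheaves, are additive along short exact sequences of Hitchin pairs; the slope $\mu$ of a Hitchin pair is by definition the slope of its underlying sheaf; and the Hitchin subsheaves of a fixed $(E,\theta)$ are stable under the lattice operations. Indeed, \autoref{lem:subsheaves}(i) gives closure under intersection, and closure under sums is immediate since $\theta\bigl((A+B)\otimes V\bigr)=\theta(A\otimes V)+\theta(B\otimes V)\subset A+B$. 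Since the torsion subsheaf $T\subset E$ is a Hitchin subsheaf (the image $\theta(T\otimes V)$ is torsion, hence contained in $T$), I may reduce to the case of $E$ torsion-free, so that all slopes occurring are finite.

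First I would construct the maximal destabilizing Hitchin subsheaf. Because every Hitchin subsheaf of $E$ is in particular a coherent subsheaf, the boundedness from above of the slopes $\mu(A)$ over all nonzero Hitchin subsheaves $A\subset E$ follows from the corresponding boundedness for coherent subsheaves on the smooth projective variety $X$. Let $\mu_{\max}$ denote the supremum; the standard argument (the rank takes finitely many values, and for fixed rank the relevant degrees are bounded above) shows that $\mu_{\max}$ is attained by a Hitchin subsheaf, and among the Hitchin subsheaves of slope $\mu_{\max}$ I choose one, which will be $\FF^{m-1}E$, of maximal rank.

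Next I would show this subsheaf is unique and semi-stable by the usual additive-function argument, now carried out internally to $\Hi^V(X)$. Setting $f(C):=\deg_H(C)-\mu_{\max}\cdot\rank(C)$ for a Hitchin subsheaf $C$, additivity of $\deg_H$ and $\rank$ along $0\to A\cap B\to A\oplus B\to A+B\to 0$ gives $f(A)+f(B)=f(A\cap B)+f(A+B)$; since $\mu(C)\le\mu_{\max}$ forces $f(C)\le 0$, two slope-$\mu_{\max}$ Hitchin subsheaves $A,B$ must satisfy $f(A\cap B)=f(A+B)=0$, whence $A+B$ again has slope $\mu_{\max}$, and maximality of the chosen rank forces $A=B$. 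The same inequality, applied to Hitchin subsheaves of $\FF^{m-1}E$, shows that $\FF^{m-1}E$ is semi-stable.

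Finally I would iterate. The quotient $(E/\FF^{m-1}E,\bar\theta)$ is again an object of $\Hi^V(X)$, and by the correspondence between subobjects of a quotient and subobjects containing $\FF^{m-1}E$ in the abelian category $\Hi^V(X)$, its Hitchin subsheaves are precisely the $A/\FF^{m-1}E$ for Hitchin subsheaves $\FF^{m-1}E\subseteq A\subseteq E$. Repeating the construction on this quotient yields the filtration with semi-stable quotients, and the inequalities $\mu(\FF^iE/\FF^{i+1}E)<\mu(\FF^{i+1}E/\FF^{i+2}E)$ hold because the maximal destabilizing subsheaf is extracted at each stage; uniqueness of the whole filtration follows from uniqueness of the maximal destabilizing subsheaf by the standard induction. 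I do not expect a genuine obstacle: the only Hitchin-specific points are the closure of Hitchin subsheaves under sums and intersections and the reduction of boundedness and attainment to the sheaf case, all of which are immediate, while everything else is the formal Harder--Narasimhan argument transported verbatim to $\Hi^V(X)$.
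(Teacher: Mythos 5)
Your overall strategy---running the classical Harder--Narasimhan construction internally to the abelian category $\Hi^V(X)$---is viable, and most of the Hitchin-specific inputs you identify are correct: the torsion subsheaf is a Hitchin subsheaf, Hitchin subsheaves are closed under sum and intersection, and boundedness and attainment of slopes are inherited from the coherent-sheaf case. But there is a genuine gap in your uniqueness step: the claim that ``maximality of the chosen rank forces $A=B$'' is false once $\dim X\ge 2$. The additive-function argument only yields that $A+B$ again has slope $\mu_{\max}$ and the maximal rank $r_{\max}$; it cannot rule out that $A+B$ strictly contains $A$ with $(A+B)/A$ a torsion sheaf supported in codimension $\ge 2$, because such a sheaf has degree zero and is invisible to $f$. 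Concretely, take $X$ a smooth projective surface and $(E,\theta)=(\reg_X^{\oplus 2},0)$, so that every coherent subsheaf is a Hitchin subsheaf; then $\mu_{\max}=0$ and $r_{\max}=2$, and $A=\cI_x\oplus\reg_X$, $B=\reg_X\oplus\cI_x$ are distinct Hitchin subsheaves of slope $0$ and rank $2$ with $A+B=E$. Your recipe could therefore output the filtration $E\supset A\supset 0$, which is not a Harder--Narasimhan filtration: $E/A$ is torsion, and $E$ is in fact semi-stable, so the correct filtration is trivial. The same defect then propagates to the strict slope inequalities and to the uniqueness of the whole filtration, both of which need the maximal destabilizing subsheaf to contain \emph{every} Hitchin subsheaf of slope $\mu_{\max}$.

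The repair is standard but has to be said, and it involves one more Hitchin-specific check missing from your list. Require $\FF^{m-1}E$ to be, in addition, \emph{saturated} in $E$ (equivalently, maximal with respect to inclusion among slope-$\mu_{\max}$, rank-$r_{\max}$ Hitchin subsheaves, which exists by the ascending chain condition); then $(A+B)/A$ is a torsion subsheaf of the torsion-free sheaf $E/A$, hence zero, so $B\subset A$ and symmetrically $A=B$. For this you must know that the saturation $\bar A$ of a Hitchin subsheaf $A$ is again a Hitchin subsheaf: since $\theta(A\otimes V)\subset A$, the composition $\bar A\otimes V\xrightarrow{\theta} E\to E/A$ factors through $(\bar A/A)\otimes V$, which is torsion, so $\theta(\bar A\otimes V)$ lies in the preimage of the torsion of $E/A$, which is exactly $\bar A$; moreover $\mu(\bar A)\ge\mu(A)$, so saturating loses nothing. (On curves your argument is fine as written, since there is no codimension-$2$ torsion, but the proposition is stated for $X$ of arbitrary dimension.) Note also that the paper itself does not prove the proposition: its proof is the citation \cite[Sect.\ 3]{Simpson--moduliI}, where Harder--Narasimhan filtrations are established for $\Lambda$-modules, of which $V$-cotwisted Hitchin pairs (sheaves of modules over $\Sym^\bullet V$) are a special case; so a self-contained argument such as yours, once repaired as above, is a legitimately different and more explicit route.
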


\begin{proof}
 See \cite[Sect.\ 3]{Simpson--moduliI}.
\end{proof}

\begin{remark}
 The Harder--Narasimhan filtration of the Hitchin pair $(E,\theta)$ does not need to agree with the Harder--Narasimhan filtration of $E$; compare \autoref{ex:stab}.
\end{remark}

\subsection{Equivariant Hitchin pairs}\label{subsect:equi}

Let $G$ be a finite group acting on a scheme $X$. A \emph{$G$-equivariant vector bundle} $(V,\alpha)$ on $X$ consists of a vector bundle $V \in \VB(X)$ and a family of isomorphisms $\alpha=\{\alpha_g\colon V\xrightarrow\sim g^*V\}$, called a \emph{$G$-linearization}, such that for all $g,h\in G$ the following diagram commutes:
\[
\xymatrix{
 V \ar^{\alpha_g}[r] \ar@/_6mm/^{\alpha_{hg}}[rrr] & g^*V \ar^{g^*\alpha_h}[r] & g^*h^*V\ar^{\cong}[r] & (hg)^*V\,.
} 
\]

\begin{remark}\label{rem:Gequi}
Let $(V,\alpha)$ be a $G$-equivariant vector bundle. For $g\in G$, we consider the composition 
\[
\Hi^V(X)\xrightarrow{g^*} \Hi^{g^*V}(X)\xrightarrow{\alpha_g^\#} \Hi^V(X)\,, 
\]
which is an autoequivalence of $\Hi^V(X)$. In the following, we simply denote this autoequivalence by $g^*\colon \Hi^V(X)\xrightarrow\sim \Hi^V(X)$. For $g,h\in G$, there is a canonical ismorphism of autoequivalences $g^*h^*\cong (hg)^*$, which means that we have an action of the group $G$ on the category $\Hi^V(X)$.   
\end{remark}

Given a $G$-equivariant vector bundle $(V,\alpha)$, a \emph{$G$-equivariant $V$-cotwisted Hitchin pair} $\bigl((E,\theta),\lambda\bigr)$ consists of a Hitchin pair $(E,\theta)\in \Hi^V(X)$ and a family of isomorphisms of Hitchin pairs $\lambda=\{\lambda_g\colon (E,\theta)\xrightarrow\sim g^*(E,\theta)\}$ such that for all $g,h\in G$ the following diagram commutes:
\[
\xymatrix{
 E \ar^{\lambda_g}[r] \ar@/_6mm/^{\lambda_{hg}}[rrr] & g^*E \ar^{g^*\lambda_h}[r] & g^*h^*E\ar^{\cong}[r] & (hg)^*E\,.
} 
\] 
That $\lambda_g$ is a morphism of Hitchin pairs means explicitly that the following diagram commutes:
\[
 \xymatrix{
E\otimes V \ar[rr]^{\lambda_g\otimes \alpha_g} \ar[d]_{\theta} && g^*E\otimes g^*V \ar[d]_{g^*\theta} \\
E\ar[rr]^{\lambda_g}  && g^*E\,. 
 }
\]
A \emph{$G$-equivariant Hitchin subsheaf} of $\bigl((E,\theta),\lambda\bigr)$ is a Hitchin subsheaf $A\subset E$ such that for every $g\in G$, we have the equality $\lambda_g(A)=g^*A$ of subsheaves of $g^*E$. 

Let $\pi\colon X\to X/G$ be a geometric quotient. For every $g\in G$, we have the equality $\pi\circ g=\pi$, which gives an isomorphism of functors $g^*\circ \pi^*\cong \pi^*$. Hence, for every $V\in \VB(X/G)$, the pull-back $\pi^*V$ carries a canonical $G$-linearisation. Furthermore, for every Hitchin pair $(F,\eta)\in \Hi^V(X/G)$, the pull-back $\pi^*(F,\eta)$ has canonically the structure of a $G$-equivariant $\pi^*V$-cotwisted Hitchin pair. 

\begin{lemma}\label{lem:stabpullbackequi}
Let a finite group $G$ act on a smooth projective variety $X$ such that $Y=X/G$ is again smooth and $\pi\colon X\to Y$ is flat. Let $H\in \NN^1(Y)$ be an ample class, $V\in \VB(Y)$, and $(F,\eta)\in \Hi^V(Y)$ a Hitchin bundle.
 \begin{enumerate}
\item\label{i} If $\mu_{\pi^*H}(A)\le \mu_{\pi^*H}(\pi_n^*F)$ holds for all $\sym_n$-equivariant Hitchin subsheaves $A$ of $\pi^*(F,\eta)$ with $\rank A<\rank F$, then $F$ is slope semi-stable with respect to $H$. 
\item\label{ii} If $\mu_{\pi^*H}(A)< \mu_{\pi^*H}(\pi_n^*F)$ holds for all $\sym_n$-equivariant Hitchin subsheaves $A$ of $\pi^*(F,\eta)$ with $\rank A<\rank F$, then $F$ is slope stable with respect to $H$. 
\end{enumerate}
\end{lemma}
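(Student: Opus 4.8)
The plan is to prove both parts by contraposition: from a Hitchin subsheaf of $(F,\eta)$ that destabilises it on $Y$, I would construct a $G$-equivariant Hitchin subsheaf of $\pi^*(F,\eta)$ contradicting the quantified hypothesis. The computational core is a slope comparison that I would set up first. Since $\pi$ is finite and flat, $X$ and $Y$ have the same dimension $n$ and $\pi$ has a well-defined positive degree $d$. For any $B\in\Coh(Y)$ one has $\rank(\pi^*B)=\rank(B)$ and $\cc_1(\pi^*B)=\pi^*\cc_1(B)$, so the projection formula for the finite flat morphism $\pi$ gives
\[
\deg_{\pi^*H}(\pi^*B)=\int_X\pi^*\bigl(\cc_1(B)\cdot H^{n-1}\bigr)=d\cdot\deg_H(B),
\]
and hence $\mu_{\pi^*H}(\pi^*B)=d\cdot\mu_H(B)$. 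Applying this to $B$ and to $F$, the common positive factor $d$ cancels, so $\mu_H(B)>\mu_H(F)$ is equivalent to $\mu_{\pi^*H}(\pi^*B)>\mu_{\pi^*H}(\pi^*F)$, and likewise with $\ge$ in place of $>$.

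Next I would produce the destabiliser on $Y$ in each case. For part (i), if $F$ is \emph{not} semi-stable there is a Hitchin subsheaf $B\subset F$ with $\mu_H(B)>\mu_H(F)$; here I would first note that automatically $\rank B<\rank F$, since if $B$ had full rank then $F/B$ would be torsion and thus satisfy $\deg_H(F/B)\ge 0$ (ampleness of $H$), forcing $\mu_H(B)\le\mu_H(F)$, a contradiction. For part (ii), if $F$ is \emph{not} stable, the definition of stability directly supplies a Hitchin subsheaf $B\subset F$ with $\rank B<\rank F$ and $\mu_H(B)\ge\mu_H(F)$. In both cases I arrive at a Hitchin subsheaf $B\subset F$ of rank strictly below $\rank F$ whose $H$-slope is $>\mu_H(F)$ (resp.\ $\ge\mu_H(F)$).

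Finally I would pull back. Set $A:=\pi^*B$. As $\pi$ is flat, $\pi^*$ is exact, so $A\hookrightarrow\pi^*F$ is a subsheaf with $\rank A=\rank B$; applying the pull-back functor of \autoref{subsect:pull} to the inclusion of Hitchin pairs $B\hookrightarrow(F,\eta)$ yields $\pi^*\eta(A\otimes\pi^*V)\subset A$, so $A$ is a Hitchin subsheaf of $\pi^*(F,\eta)$. Because $B$ is pulled back from $Y$, the canonical isomorphism $g^*\pi^*\cong\pi^*$ identifies $g^*A$ with $A$ compatibly, i.e.\ $\lambda_g(A)=g^*A$ for every $g\in G$, so $A$ is $G$-equivariant in the sense of \autoref{subsect:equi}. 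Combining this with the slope comparison, $A$ is a $G$-equivariant Hitchin subsheaf with $\rank A<\rank F$ and $\mu_{\pi^*H}(A)>\mu_{\pi^*H}(\pi^*F)$ (resp.\ $\mu_{\pi^*H}(A)\ge\mu_{\pi^*H}(\pi^*F)$), contradicting hypothesis (i) (resp.\ (ii)); hence $F$ is semi-stable (resp.\ stable). I do not expect a deep obstacle here: the real substance is the exactness of the numerical comparison, which the projection formula handles cleanly, and the simultaneous verification that $\pi^*B$ is both a Hitchin subsheaf and $G$-equivariant. The one point genuinely needing care is the rank bound in the semi-stable case, since hypothesis (i) only tests subsheaves of rank $<\rank F$; there one must rule out a full-rank destabiliser, which is exactly what the ampleness of $H$ provides.
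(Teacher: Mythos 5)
Your proof is correct and is essentially the same argument the paper relies on: the paper's proof of \autoref{lem:stabpullbackequi} simply cites the vector-bundle case \cite[Lem.\ 1.1]{Krug--stab}, and that proof is precisely your contraposition---pull a destabilizing Hitchin subsheaf $B\subset F$ back along the finite flat quotient $\pi$, observe that $\pi^*B$ is automatically a $G$-equivariant Hitchin subsheaf of $\pi^*(F,\eta)$, and compare slopes via $\mu_{\pi^*H}(\pi^*B)=\deg(\pi)\cdot\mu_H(B)$. Your extra step ruling out a full-rank destabilizer in the semi-stable case (nonnegativity of $\deg_H$ on torsion quotients, using ampleness of $H$) is a legitimate detail the argument genuinely needs, since the hypothesis only tests subsheaves of rank $<\rank F$.
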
 
\begin{proof}
The proof is completely analogous to the proof for vector bundles without the structure of a Hitchin pair; see \cite[Lem.\ 1.1]{Krug--stab}.
\end{proof}

\section{Construction of Hitchin pairs on Hilbert schemes of points}\label{sect:Hilbert}

\subsection{Hilbert schemes of points}

Let $X$ be a smooth quasi-projective variety. For every $n\in \IN$ there is a fine moduli 
space $X^{[n]}$ of zero-dimensional sub-schemes of $X$ of length $n$, called the \emph{Hilbert 
scheme of $n$ points on $X$} (also called punctual Hilbert scheme). This means that there is a closed
sub-scheme $\Xi_n\subset 
X\times X^{[n]}$ which is flat and finite of degree $n$ over $X^{[n]}$, called the 
\emph{universal family}, such that, for every scheme $T$ and every closed sub-scheme $Z\subset 
X\times T$ which is flat and finite of degree $n$ over $T$, there is a \emph{classifying 
morphism} $f\colon T\to X^{[n]}$ such that $Z=(\id_X\times f)^{-1} \Xi_n$, where 
$(\id_X\times f)^{-1} \Xi_n\cong T\times_{X^{[n]}} \Xi_n$ is the scheme-theoretic preimage.

There is the \emph{Hilbert--Chow morphism} $\mu\colon X^{[n]}\to X^{(n)}$ to the symmetric 
product $X^{(n)}:=X^n/\sym_n$ that sends any zero-dimensional sub-scheme 
$\xi\subset X$ to its weighted support: $\mu([\xi])=\sum_{x\in \xi} \ell(\xi,x)\cdot x$.

If $X=C$ is a smooth curve, $\mu$ is an isomorphism, hence $C^{[n]}\cong C^{(n)}$. If $X$ is a smooth surface, $X^{[n]}$ is smooth and $\mu\colon X^{[n]}\to X^{(n)}$ is a crepant resolution of the singularities of the symmetric product.

\subsection{Tautological bundles and Hitchin pairs}\label{subsect:taut}

Let $X$ be a smooth quasi-projective variety, let $n\in \IN$, and let $X\xleftarrow p 
\Xi_n\xrightarrow q X^{[n]}$ be the projections from the universal family of the Hilbert 
scheme $X^{[n]}$. Given a sheaf $E\in \Coh(X)$, the associated \emph{tautological sheaf} is 
defined by $E^{[n]}=q_*p^*E\in \Coh(X^{[n]})$. Over a point $[\xi]\in X^{[n]}$ corresponding 
to a zero-dimensional sub-scheme $\xi\subset X$ of length $n$, the fiber of the tautological 
sheaf is given by $E^{[n]}([\xi])=\Ho^0(E_{\mid\xi})$. If $E$ is a vector bundle, then $E^{[n]}$ 
is again a vector bundle with $\rank E^{[n]}=n\rank E$. This follows from the fact that 
$q\colon \Xi_n\to X^{[n]}$ is flat and finite of degree $n$.

Let $V\in \VB(X)$. If $E$ carries the structure of a $V$-cotwisted Hitchin pair, we get an induced structure of a $V^{[n]}$-cotwisted Hitchin pair on $E^{[n]}$. More precisely, for 
$(E,\theta)\in \Hi^V(X)$, we define the associated \emph{tautological Hitchin pair}, using 
the Hitchin enhanced Fourier--Mukai transform as introduced in \autoref{subsect:FM}, by \[
 (E^{[n]},\theta^{\{n\}}):=(E,\theta)^{[n]}:=\Phi_{\reg_{\Xi_n}}^{\Xi_n}(E,\theta)\in \Hi^{V^{[n]}}(X^{[n]})\,.
\]

\begin{remark}\label{rem:taut}
To provide some intuition, let us give a concrete description of the induced map $\theta^{\{n\}}=q_*p^*\theta\circ \nu_{q,E,V}\colon E^{[n]}\otimes V^{[n]}\to E^{[n]}$. For a zero-dimensional sub-scheme $\xi\subset X$ of length $n$, the fiber of $\theta^{\{n\}}$ over the point $[\xi]\in X^{[n]}$ is given by the composition 
\[
\theta^{\{n\}}([\xi])\colon (E^{[n]}\otimes V^{[n]})([\xi])\cong \Ho^0(E_{\mid \xi})\otimes \Ho^0(E_{\mid \xi})\xrightarrow\cup \Ho^0((E\otimes V)_{\mid \xi})\xrightarrow{\theta_{\mid \xi}} \Ho^0(E_{\mid \xi})\cong E^{[n]}([\xi])\,. 
\]
The fact that $\bigl(\theta^{\{n\}}\circ (\theta^{\{n\}}\otimes \id_{V^{[n]}})
\bigr)_{\mid E^{[n]}\otimes \bigwedge^2 V^{[n]}}=0$, which already follows
from \autoref{lem:push}, can also be read off from the above description of $\theta^{\{n\}}$.
\end{remark}
 
\begin{lemma}\label{lem:baseuniversal}
 Let $T$ be a scheme, let $Z\subset X\times T$ be a sub-scheme which is flat and finite of degree $n$ over $T$, and let $f\colon T\to X^{[n]}$ be the classifying morphism for $Z$. Then, for $(E,\theta)\in \Hi^V(X)$, we have a natural isomorphism of $f^*V^{[n]}$-cotwisted Hitchin pairs
 \[
 f^*(E,\theta)^{[n]}\cong \Phi^Z_{\reg_Z}(E,\theta)
 \]
\end{lemma}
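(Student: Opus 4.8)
The plan is to recognize that $\Phi^Z_{\reg_Z}(E,\theta)$ and $f^*(E,\theta)^{[n]}$ are both built from the same Fourier--Mukai-type construction, and to obtain the isomorphism by combining the universal property of the Hilbert scheme with the base-change result \autoref{FMbasechange} already established for Hitchin pairs. The key observation is that, since $f\colon T\to X^{[n]}$ is the classifying morphism for $Z$, we have $Z=(\id_X\times f)^{-1}\Xi_n\cong T\times_{X^{[n]}}\Xi_n$, so that the Cartesian square
\[
 \xymatrix{
 Z\ar[r]^{f'} \ar[d] & \Xi_n \ar[d] \\
 T\ar[r]^{f} & X^{[n]}
 }
\]
is precisely an instance of the diagram \eqref{diag:cart2} with $Y=X^{[n]}$ and $Z=Z_T$.

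First I would apply \autoref{FMbasechange} to the kernel $\cP=\reg_{\Xi_n}$ on $\Xi_n$, which gives an isomorphism of functors
\[
 f^*\circ \Phi^{\Xi_n}_{\reg_{\Xi_n}}\cong \Phi^{Z}_{f'^*\reg_{\Xi_n}}\,.
\]
By definition, $(E,\theta)^{[n]}=\Phi^{\Xi_n}_{\reg_{\Xi_n}}(E,\theta)$, so the left-hand side evaluated at $(E,\theta)$ is exactly $f^*(E,\theta)^{[n]}$. It remains only to identify the kernel appearing on the right: since $f'\colon Z\to \Xi_n$ is the base change of $f$ and both $Z$ and $\Xi_n$ are cut out by their respective ideal sheaves, flatness of $\Xi_n$ over $X^{[n]}$ (it is finite flat of degree $n$) ensures that the structure sheaf pulls back to the structure sheaf, i.e.\ $f'^*\reg_{\Xi_n}\cong \reg_Z$. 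Substituting this into the displayed isomorphism yields $f^*(E,\theta)^{[n]}\cong \Phi^Z_{\reg_Z}(E,\theta)$, as desired.

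The main point to verify carefully is the identification $f'^*\reg_{\Xi_n}\cong\reg_Z$ together with the compatibility of \autoref{FMbasechange} with the Hitchin-enhanced structure: one must check that the isomorphism of underlying coherent sheaves supplied by ordinary Fourier--Mukai base change is in fact a morphism of Hitchin pairs after the identification of cotwisting bundles via $\beta^\#$. However, this is exactly the content of \autoref{lem:basechange} and \autoref{FMbasechange}, so no new calculation is required; the statement follows formally once the Cartesian square is recognized and the kernel identified. I expect the only genuine subtlety to be bookkeeping of the cotwisting bundles, namely checking that the class $f^*V^{[n]}=f^*q_*p^*V$ agrees (under base change) with the cotwisting bundle $q'_*p'^*V$ produced by $\Phi^Z_{\reg_Z}$, which is again guaranteed by \autoref{lem:basechange} applied to the finite flat map $q$.
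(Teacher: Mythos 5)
Your proof is correct and is essentially the paper's own argument: the paper's proof consists precisely of applying \autoref{FMbasechange} to the Cartesian square $Z\cong T\times_{X^{[n]}}\Xi_n$ furnished by the universal property of $X^{[n]}$, with the cotwisting-bundle bookkeeping absorbed into \autoref{lem:basechange} exactly as you describe. One minor remark: the identification $f'^*\reg_{\Xi_n}\cong\reg_Z$ requires no flatness at all, since the (non-derived) pullback of the structure sheaf along any morphism is the structure sheaf of the source.
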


\begin{proof}
 This follows by applying \autoref{FMbasechange} to the Cartesian diagram
 \begin{equation*}
 \xymatrix{
 Z\ar[r]^{} \ar[d]_{} & \Xi_n \ar[d]^{} \\
 T\ar[r]^{f} & X^{[n]}\,. 
 }
\end{equation*} 
\end{proof}

\subsection{Tangent bundle of the Hilbert scheme}

If $X$ is a smooth curve or surface, the Hilbert scheme $X^{[n]}$ is again smooth.
In this case, one might hope that, if we start with a sheaf $E\in \Coh(X)$ and a non-zero Higgs field $E\to E\otimes \Omega_X$, we get an induced non-zero Higgs field $E^{[n]}\to E^{[n]}\otimes \Omega_{X^{[n]}}$ on $E^{[n]}$. However, as the following example shows, there cannot be such a general construction. 

\begin{example}\label{ex:noHiggs}
 Let $X=\IP^1$. Then, as $\Omega_X=\reg_{\IP^1}(-2)$, the bundle $E:=\reg_{\IP^1}(-1)\oplus\reg_{\IP^1}(1)$ has a non-zero Higgs field. There is an isomorphism $(\IP^1)^{[2]}\cong (\IP^1)^{(2)}\cong \IP^2$ under which the tautological bundles are given by 
\[
\reg_{\IP^1}(-1)^{[2]}\cong \reg_{\IP^2}(-1)^{\oplus 2}\quad,\quad \reg_{\IP^1}(1)^{[2]}\cong \reg_{\IP^2}^{\oplus 2}\,; 
\]
see \cite[Sect.\ 3]{Nagaraj--sym}. Hence, $E^{[2]}\cong \reg_{\IP^2}(-1)^{\oplus 2}\oplus \reg_{\IP^2}^{\oplus 2}$. Using the Euler sequence, one computes $\Ho^0(\Omega_{\IP^2}(-1))=\Ho^0(\Omega_{\IP^2})=\Ho^0(\Omega_{\IP^2}(1))=0$. It follows that 
$\Hom(E^{[2]},E^{[2]}\otimes \Omega_{\IP^2})=0$, which means that there is no non-zero Higgs field on $E^{[2]}$, though there is one on $E$.
\end{example}

However, our construction from the previous subsection equips $E^{[n]}$ with the structure of a logarithmic Higgs sheaf, with respect to the boundary divisor $B_n\subset X^{[n]}$, whenever there is a Higgs field on $E$. Indeed, let $(E,\theta)$ be a Higgs sheaf, i.e.,\ a $\TT_X$-cotwisted Hitchin pair. Then $(E,\theta)^{[n]}$ is a $(\TT_X)^{[n]}$-cotwisted Hitchin pair. Hence, our assertion follows from the following result. 

\begin{theorem}[\cite{Stapletontaut}]
Let $X$ be a smooth curve or surface. Then there is an isomorphism 
\[
 (\TT_X)^{[n]}\cong \TT_{X^{[n]}}(-\log B_n)\,.
\]
\end{theorem}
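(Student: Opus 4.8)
The plan is to prove the isomorphism $(\TT_X)^{[n]}\cong \TT_{X^{[n]}}(-\log B_n)$ by constructing a natural morphism between the two sheaves and then checking that it is an isomorphism, reducing to a fiberwise statement wherever possible. First I would recall that the tautological sheaf $(\TT_X)^{[n]}=q_*p^*\TT_X$ parametrizes, over a length-$n$ subscheme $\xi\subset X$, the space $\Ho^0((\TT_X)_{\mid\xi})$ of sections of the tangent sheaf restricted to $\xi$, while $\TT_{X^{[n]}}(-\log B_n)$ consists of those vector fields on $X^{[n]}$ which along the regular locus of $B_n$ are tangent to the boundary. The key geometric input is the well-known identification of the tangent space to the Hilbert scheme at $[\xi]$ with $\Hom_{\reg_X}(\cI_\xi,\reg_\xi)$; the logarithmic condition should cut this down (or re-express it) precisely to the fiber $\Ho^0((\TT_X)_{\mid\xi})$.

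The heart of the argument is to produce the map and identify it with the Kodaira--Spencer description of deformations. On the universal family $\Xi_n\subset X\times X^{[n]}$, a section of $p^*\TT_X$ is an infinitesimal motion of the points of $X$; pushing forward along the finite flat map $q$ and composing with the tautological identification of $\TT_{X^{[n]}}$ with first-order deformations of $\xi$ inside $X$ yields a natural homomorphism $(\TT_X)^{[n]}\to \TT_{X^{[n]}}$. I would then argue that its image lands in the logarithmic subsheaf $\TT_{X^{[n]}}(-\log B_n)$: intuitively, a deformation of $\xi$ induced by an honest vector field on $X$ does not change the \emph{number} of points with multiplicity, hence preserves the stratification of $X^{[n]}$ by support type and in particular stays tangent to $B_n$ along $(B_n)_{\mathsf{reg}}$. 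Making this precise requires a local computation at a generic point of $B_n$, where $\xi$ consists of one length-two subscheme together with $n-2$ reduced points.

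The main technical step, and the place I expect the real obstacle, is the local analysis at the boundary divisor $B_n$. Away from $B_n$, i.e.\ over the open locus parametrizing reduced subschemes, both sheaves restrict to the obvious thing—$n$ copies of $\TT_X$ patched together by the symmetric-group action—so the comparison map is visibly an isomorphism there by the \'etale-local product structure of $X^{[n]}$ over the configuration space. The content is therefore entirely about extending this across $B_n$ and checking that the two coherent sheaves, both of which are locally free of the same rank $n\cdot\dim X$, agree there. Since $X$ has dimension at most $2$, the boundary $B_n$ is a divisor and one can use a Hartogs-type argument: a map of reflexive (indeed locally free) sheaves that is an isomorphism in codimension one and along the generic point of $B_n$ is an isomorphism. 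Concretely, I would choose local coordinates on a chart of $X^{[n]}$ adapted to a generic point of $B_n$ (using the product description $X^{[2]}\times (X\setminus\{*\})^{(n-2)}$ near such a point, and the explicit model $X^{[2]}=\Bl_\Delta(X\times X)/\sym_2$ for surfaces), and verify by direct computation that the logarithmic vector fields tangent to the exceptional/branch divisor are exactly the sections of $(\TT_X)^{[n]}$, i.e.\ the sections of $\TT_X$ over the non-reduced subscheme. For curves this computation is elementary since $C^{[n]}\cong C^{(n)}$ and the boundary is the branch divisor of $C^n\to C^{(n)}$, where the invariant-theory description of logarithmic vector fields under $\sym_n$ is classical; for surfaces one leans on the blow-up model and checks tangency to the exceptional divisor of the Hilbert--Chow morphism. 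Once the fiberwise and generic-codimension-one matching is established, the global isomorphism follows by naturality and the reflexivity/local-freeness of both sides.
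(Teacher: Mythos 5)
Your proposal is correct and follows essentially the same route as the proof this statement actually rests on: the paper gives no independent argument here, but simply cites Stapleton's Theorem B for the surface case and observes that the identical argument covers curves, where it is even simpler because the universal family $\Xi_n\cong C\times C^{(n-1)}$ is smooth everywhere, so no restriction to an open subset is needed. Your steps --- the deformation-theoretic map $(\TT_X)^{[n]}\to \TT_{X^{[n]}}$ coming from $T_{[\xi]}X^{[n]}\cong \Hom(\cI_\xi,\reg_\xi)$, the isomorphism over the locus of reduced subschemes, the explicit check at a generic point of $B_n$ (one length-two point plus $n-2$ distinct reduced points), and the extension across the remaining codimension-$\ge 2$ locus --- reconstruct exactly Stapleton's strategy, with the one small caveat that a priori you may only claim $\TT_{X^{[n]}}(-\log B_n)$ is \emph{reflexive} rather than locally free (its local freeness is part of the conclusion of the theorem), which is all the Hartogs-type step requires anyway.
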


\begin{proof}
For $X$ a smooth surface, this is \cite[Thm.\ B]{Stapletontaut}. The proof in the case case where $X$ is a 
smooth curve is essentially the same. The only difference is that we do not need to restrict to an open subset 
$U\subset X^{[n]}$ (as done in the surface case on top of page 1181 of \cite{Stapletontaut}.) since, in the curve 
case, the universal family $\Xi_n$ is already smooth everywhere (for $X$ a smooth curve, we have $\Xi_n\cong 
X\times X^{(n-1)}$). 
\end{proof}

\section{Reconstruction in the case of curves}\label{sect:reccurves}

\subsection{Curves of genus $g\ge 2$}\label{subsect:g2}

In this subsection, we will prove \autoref{thm:g2}. So let $C$ be a smooth projective curve of genus $\genus(C)\ge 2$, let $V\in \VB(C$), and let $(E,\theta)\in \Hi^V(C)$ be a semi-stable Hitchin bundle.
We enhance the reconstruction method of \cite{Biswas-Nagaraj--reconstructioncurves} and \cite[Sect.\ 2]{Biswas-Nagaraj--reconstructionsurfaces}, which reconstructs vector bundles $E$ on $C$ from their tautological images $E^{[n]}$ on the symmetric product $C^{(n)}$, to Hitchin bundles. 

Let $C\cong \Delta\subset C\times C$ be the diagonal with ideal sheaf $\cI_{\Delta}$. We denote the $(n-1)$-th infinitesimal thickening of the diagonal by $Z:=V(\cI_\Delta^n)\subset C\times C$. Via the projection $\pr_1\colon C\times C\to C$, the sub-scheme $Z\subset C\times C$ is a flat family of length $n$ sub-schemes of $C$ over $C$. Hence, we get a classifying morphism $\iota\colon C\to C^{(n)}$, which is a closed embedding with image the small diagonal in $C^{(n)}$. By \autoref{lem:baseuniversal}, we have 
\[
 \iota^*(E,\theta)^{[n]}\cong \Phi^Z_{\reg_Z}(E,\theta)\,.
\]
Hence, it is sufficient to show that the isomorphism class of $(E,\theta)\in \Hi^V(C)$ is determined by $\Phi^Z_{\reg_Z}(E,\theta)\in \Hi^{b_*a^*V}(C)$, where $a,b\colon Z\to C$ are the restrictions of the projections $\pr_1,\pr_2\colon C\times C\to C$. 

The structure sheaf $\reg_{Z}$ has the filtration
\[
 \reg_{Z}=\cJ_\Delta^0\supset \cJ_\Delta^1\supset \dots \supset\cJ_\Delta^{n-1}\supset \cJ_\Delta^{n}=0\,,
\]
where $\cJ_\Delta:=\cI_\Delta/\cI_\Delta^{n}$. This induces a filtration by Hitchin subsheaves
\begin{equation}\label{eq:HNJet}
\Phi^Z_{\reg_Z}(E,\theta)= \Phi^Z_{\cJ_\Delta^0}(E,\theta)\supset
\Phi^Z_{\cJ_\Delta^1}(E,\theta)\supset \dots\supset
\Phi^Z_{\cJ_\Delta^{n-1}}(E,\theta)\supset \Phi^Z_{\cJ_\Delta^n}(E,\theta)=0
\end{equation}
which we will show to be the Harder--Narasimhan filtration of $\Phi^Z_{\reg_Z}(E,\theta)\in \Hi^{b_*a^*V}(C)$. Note that we have short exact sequences
\begin{equation}\label{eq:sesideals}
 0\to \cJ_\Delta^{k+1}\to \cJ_{\Delta}^{k}\to i_*\omega_C^{\otimes k}\to 0
\end{equation}
where $i\colon C\hookrightarrow Z$ is the closed embedding of the reduced diagonal. By \autoref{lem:FMinduced} combined with \autoref{ex:FMpull}, we have $\Phi^{Z}_{i_*\omega_C^{\otimes k}}\cong \phi^\#\bigl((E,\theta)\otimes \omega_C^{\otimes k}\bigr)$ where $\phi\colon b_*a^*V\twoheadrightarrow V$ is the canonical surjection induced by $\reg_Z\twoheadrightarrow\reg_\Delta$. Hence, applying \autoref{lem:ses} to \eqref{eq:sesideals} gives a short exact sequence
\begin{equation*}
 0\to \Phi^Z_{\cJ_\Delta^{k+1}}(E,\theta)\to \Phi^Z_{\cJ_\Delta^k}(E,\theta)\to \phi^\#\bigl((E,\theta)\otimes \omega_C^{\otimes k}\bigr)\to 0\,.
\end{equation*}
By the assumption that $(E,\theta)$ is semi-stable, \autoref{rem:stabletwisted}, and \autoref{lem:inducedstab}, we see that the factor 
$\Phi^Z_{\cJ_\Delta^{k}}(E,\theta)/ \Phi^Z_{\cJ_\Delta^{k+1}}(E,\theta)\cong \phi^\#\bigl((E,\theta)\otimes \omega_C^{\otimes k}\bigr)$ is semi-stable. Furthermore, since $\deg(\omega_C)>0$ by the assumption on the genus $\genus(C)$, we have strict inequalities \[\mu\left(\Phi^Z_{\cJ_\Delta^{k}}(E,\theta)/ \Phi^Z_{\cJ_\Delta^{k+1}}(E,\theta)\right)
< \mu\left(\Phi^Z_{\cJ_\Delta^{k+1}}(E,\theta)/ \Phi^Z_{\cJ_\Delta^{k+2}}(E,\theta)\right)\,.\] 
This means that 
\eqref{eq:HNJet} is indeed the Harder--Narasimhan filtration of $\Phi^Z_{\reg_Z}(E,\theta)\in 
\Hi^{b_*a^*V}(X)$; see \autoref{prop:HNfilt}. The uniqueness of the first factor $\phi^\#(E,\theta)$ of the 
Harder--Narasimhan filtration, together with \autoref{lem:induced}(i), now show that the isomorphism class of 
$(E,\theta)$ is determined by $\Phi^Z_{\reg_Z}(E,\theta)\cong \iota^*(E,\theta)^{[n]}$.

\begin{remark}
In analogy with \cite[Sect.\ 2]{Biswas-Nagaraj--reconstructionsurfaces}, we can relax the assumptions of \autoref{thm:g2} as follows. Let $(E,\theta)\in \Hi^V(X)$ be a Hitchin bundle with Harder--Narasimhan filtration 
\[
E=\FF^0E\supset \FF^1 E\supset \dots\supset \FF^{m-1}E\supset \FF^{m}E=0 \,.
\]
We set $\mu_{\mathsf{min}}(E,\theta):=\mu(E/\FF^{1} E)$ and $\mu_{\mathsf{max}}(E,\theta):=\mu(\FF^{m-1}E)$.
Then, instead of assuming in \autoref{thm:g2} that $(E,\theta)$ and $(F,\eta)$ are semi-stable, it is sufficient to assume that 
\[
\mu_{\mathsf{max}}(E,\theta)-\mu_{\mathsf{min}}(E,\theta)<2(\genus(C)-1)\quad\text{and}\quad \mu_{\mathsf{max}}(F,\theta)-\mu_{\mathsf{min}}(F,\theta)<2(\genus(C)-1)\,. 
\]
\end{remark}

\subsection{Elliptic curves}

In this subsection, we prove
\autoref{thm:g1} as a consequence of the following more general result; compare \cite[Thm.\ 5.2]{Krug--reconstruction}.

\begin{prop}\label{prop:autom}
Let $X$ be a smooth projective variety which has $n+1$ automorphisms $\sigma_0,\dots,\sigma_n\in \Aut(X)$ \emph{with empty pairwise equalizers}, which means that $\sigma_i(x)\neq \sigma_j(x)$ for all $i\neq j$ and all $x\in X$. Then, for every $n\in \IN$ and every two Hitchin pairs $(E,\theta),(F,\eta)\in \Hi^V(X)$, we have 
 \[
 (E,\theta)^{[n]}\cong (F,\eta)^{[n]}\quad\implies \quad (E,\theta)\cong (F,\eta)\,. 
 \]
\end{prop}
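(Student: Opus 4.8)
The plan is to follow the strategy of \cite[Thm.\ 5.2]{Krug--reconstruction}, using the $n+1$ automorphisms to build $n+1$ degree-$n$ families over $X$ and then running a Krull--Schmidt argument. Concretely, for each $j\in\{0,\dots,n\}$ I would consider the closed subscheme
\[
Z_j:=\bigsqcup_{i\neq j}\Gamma_{\sigma_i}\subset X\times X,\qquad \Gamma_{\sigma_i}=\{(\sigma_i(t),t):t\in X\},
\]
viewed as a family over the second factor. Since the $\sigma_i$ have empty pairwise equalizers, the graphs $\Gamma_{\sigma_i}$ are pairwise disjoint, so the projection $q_j\colon Z_j\to X$ to the second factor is flat and finite of degree $n$ (a trivial $n$-sheeted cover), and on the component $\Gamma_{\sigma_i}$ the first projection $p_j$ restricts to $\sigma_i$. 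Letting $f_j\colon X\to X^{[n]}$ be the classifying morphism of $Z_j$ and computing the Hitchin enhanced Fourier--Mukai transform componentwise, \autoref{lem:baseuniversal} would give a natural isomorphism
\[
f_j^*(E,\theta)^{[n]}\;\cong\;\Phi^{Z_j}_{\reg_{Z_j}}(E,\theta)\;\cong\;\bigoplus_{i\neq j}\sigma_i^*(E,\theta)
\]
in $\Hi^{W_j}(X)$, where $W_j:=\bigoplus_{i\neq j}\sigma_i^*V$ and the Hitchin field of the right-hand side is block diagonal with blocks $\sigma_i^*\theta$.

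First I would feed the hypothesis $(E,\theta)^{[n]}\cong(F,\eta)^{[n]}$ through each $f_j^*$ to obtain isomorphisms $\bigoplus_{i\neq j}\sigma_i^*(E,\theta)\cong\bigoplus_{i\neq j}\sigma_i^*(F,\eta)$ in $\Hi^{W_j}(X)$ for every $j$. To compare these across the different $j$, I would transport everything into the single category $\Hi^{W}(X)$ with $W:=\bigoplus_{i=0}^n\sigma_i^*V$: writing $\pr_j\colon W\to W_j$ for the projection and applying the change-of-cotwisting functor $\pr_j^\#$ of \autoref{subsect:change}, and setting $G_i:=(\pr^W_i)^\#\sigma_i^*(E,\theta)$ and $H_i:=(\pr^W_i)^\#\sigma_i^*(F,\eta)$ with $\pr^W_i\colon W\to\sigma_i^*V$ the $i$-th projection, a direct computation of the induced fields shows that $\pr_j^\#$ carries the above isomorphism to
\[
\bigoplus_{i\neq j}G_i\;\cong\;\bigoplus_{i\neq j}H_i\qquad\text{in }\Hi^{W}(X),\quad j=0,\dots,n.
\]

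Now all objects live in the single additive category $\Hi^{W}(X)$, whose Hom-spaces are finite-dimensional over $\IC$ (being subspaces of the corresponding Hom-spaces of coherent sheaves on the projective variety $X$) and in which idempotents split (it is abelian); hence it is Krull--Schmidt. For a fixed indecomposable $P\in\Hi^W(X)$, I would let $a_i$ and $b_i$ be the multiplicities of $P$ in $G_i$ and $H_i$. Uniqueness of the indecomposable decomposition turns the $n+1$ isomorphisms above into the linear system $\sum_{i\neq j}a_i=\sum_{i\neq j}b_i$ for $j=0,\dots,n$. Summing over $j$ gives $\sum_i a_i=\sum_i b_i$, and substituting back yields $a_j=b_j$ for every $j$; in particular $a_0=b_0$. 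As $P$ was arbitrary, $G_0\cong H_0$. Since $\pr^W_0\colon W\to\sigma_0^*V$ is surjective, $(\pr^W_0)^\#$ is fully faithful by \autoref{lem:induced}(i), hence reflects isomorphisms, so $\sigma_0^*(E,\theta)\cong\sigma_0^*(F,\eta)$; applying the equivalence $(\sigma_0^{-1})^*$ then gives $(E,\theta)\cong(F,\eta)$.

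I expect the main obstacle to be the bookkeeping of the cotwisting bundles: the natural summands $\sigma_i^*(E,\theta)$ live in the categories $\Hi^{\sigma_i^*V}(X)$, which differ from one another, so the Krull--Schmidt comparison only becomes meaningful after the functors $\pr_j^\#$ have placed all the data inside the common category $\Hi^W(X)$. One must check that this transport really sends $\bigoplus_{i\neq j}\sigma_i^*(E,\theta)$ to $\bigoplus_{i\neq j}G_i$ with block-diagonal field (so that the projections compose correctly, $\pr_i^{W_j}\circ\pr_j=\pr^W_i$), and that the final descent along $(\pr^W_0)^\#$ is valid, for which the full faithfulness in \autoref{lem:induced}(i) is exactly the needed tool. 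The remaining geometric input --- that empty pairwise equalizers make the graphs disjoint and the transform split as a direct sum --- is routine.
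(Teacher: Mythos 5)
Your proposal is correct and follows essentially the same route as the paper's proof: disjoint graphs $\bigsqcup_{i\neq j}\Gamma_{\sigma_i}$, classifying morphisms $f_j$, the splitting of $f_j^*(E,\theta)^{[n]}$ via \autoref{lem:baseuniversal}, \autoref{lem:FMinduced} and \autoref{ex:FMpull}, transport into a single category $\Hi^{W}(X)$ by change-of-cotwisting functors, a Krull--Schmidt cancellation, and descent via the full faithfulness of \autoref{lem:induced}(i). The only deviations are cosmetic: the paper normalizes $\sigma_0=\id_X$ at the outset where you instead apply $(\sigma_0^{-1})^*$ at the end, and where the paper cites \cite[Prop.\ 5.4]{Krug--reconstruction} for the cancellation step you prove it directly by the multiplicity count $\sum_{i\neq j}a_i=\sum_{i\neq j}b_i$ for all $j$, which forces $a_j=b_j$ --- a valid, self-contained substitute.
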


\begin{proof}
Replacing $\sigma_i$ by $\sigma_0^{-1}\circ \sigma_i$, we can assume without loss of generality that $\sigma_0=\id_X$. By the assumption on the automorphisms, the graphs $\Gamma_{\sigma_i}$ are pairwise disjoint. Hence, for every $j\in \{0,\dots, n\}$, the union
\[G_j=\bigsqcup_{\substack{i=0,\dots,n\\i\neq j}}\Gamma_{\sigma_i}\subset X\times X\]
is flat and finite of degree $n$ over $X$. Let $f_j\colon X\to X^{[n]}$ be the classifying morphism for $G_j$. 
By \autoref{lem:baseuniversal}, we have $f_j^*(E,\theta)^{[n]}\cong \Phi^{G_j}_{\reg_{G_j}}(E,\theta)$. Since the $\Gamma_{\sigma_j}$ are disjoint, we have $\reg_{G_j}\cong \oplus_{i\neq j}\reg_{\Gamma_{\sigma_i}}$. 
Together with \autoref{lem:FMinduced} and \autoref{ex:FMpull}, this gives
\begin{equation}\label{eq:fj}
f_j^*(E,\theta)^{[n]}\cong \Phi^{G_j}_{\reg_{G_j}}(E,\theta)\cong \bigoplus_{i\neq j} \phi^{j\#}_i\Phi^{\Gamma_{\sigma_i}}_{\reg_{\Gamma_{\sigma_i}}}(E,\theta)\cong \bigoplus_{i\neq j} \phi^{j\#}_i\sigma_i^*(E,\theta),
\end{equation}
where $\phi^j_i\colon \Phi_{\reg_{G_j}}(V)\cong \oplus_{i\neq j}\sigma_i^*V\to \Phi_{\reg_{\Gamma_{\sigma_i}}}(V)\cong \sigma_i^*V$ is the projection, which is induced by the surjection $\reg_{G_j}\to \reg_{\Gamma_{\sigma_i}}$. We also set $\mathbb V:=\oplus_{i=1}^n\sigma_i^*V$, and write 
\[
 \psi_j\colon \mathbb V\to \bigoplus_{\substack{i=0,\dots,n\\i\neq j}}\sigma_i^*V\quad,\quad \phi_k=\phi^j_k\circ \psi_k\colon \mathbb V \to \sigma_k^*V 
\]
for the appropriate projections to the summands. Applying $\psi_j^\#\colon \Hi^{\oplus_{i\neq j} \sigma_i^* V}(X)\to \Hi^{\mathbb V}(X)$ to \eqref{eq:fj} gives an isomorphism 
\[
\psi_j^\#f_j^*(E,\theta)^{[n]}\cong \bigoplus_{i\neq j} \phi^{\#}_i\sigma_i^*(E,\theta)\cong \bigoplus_{i\neq j} \sigma_i^*\phi^{\#}_0(E,\theta)\,. 
\]
Here, the $\sigma_i^*$ on the right-hand side is a shortcut for the autoequivalence $\alpha_{\sigma_i}^\#\circ \sigma_i^*$ of $\Hi^{\mathbb V}(X)$ where $\alpha_{\sigma_i}\colon \mathbb V\to \sigma_i^* \mathbb V$ is the canonical isomorphism given by permutation of the summands; compare \autoref{rem:Gequi}. 

Considering these isomorphisms for varying $j=0,\dots, n$ gives
\begin{equation}\label{eq:implication1}
(E,\theta)^{[n]}\cong (F,\eta)^{[n]}\quad\implies\quad \bigoplus_{\substack{i=0,\dots,n\\i\neq j}}\sigma_i^*\phi^{\#}_0(E,\theta)\cong \bigoplus_{\substack{i=0,\dots,n\\i\neq j}}\sigma_i^*\phi^{\#}_0(F,\eta)\quad \forall\, j=0,\ldots, n\,.
\end{equation}
Since the category $\Hi^{\mathbb V}(X)$ is Krull-Schmidt, \cite[Prop.\ 5.4]{Krug--reconstruction} gives
\begin{equation}\label{eq:implication2}
 \bigoplus_{\substack{i=0,\dots,n\\i\neq j}}\sigma_i^*\phi^{\#}_0(E,\theta)\cong \bigoplus_{\substack{i=0,\dots,n\\i\neq j}}\sigma_i^*\phi^{\#}_0(F,\eta)\quad \forall j=0,\ldots, n\,\,\implies\quad \phi^{\#}_0(E,\theta)\cong \phi^{\#}_0(F,\eta)\,.
\end{equation}
Combining implication \eqref{eq:implication1} with implication \eqref{eq:implication2} and \autoref{lem:induced}(i) proves the assertion. 
\end{proof}

\autoref{thm:g1} is a special case of \autoref{prop:autom} since, on an elliptic surface, there is an infinite group of automorphisms with empty pairwise equalizers, namely the transpositions.

\section{Stability in the case of curves}\label{sect:stabcurves}

In this section, we prove \autoref{thm:curvestab}. The special case of bundles without the structure of a 
Hitchin pair was proved in \cite{Krug--stab}, and here we explain the extra ingredients necessary for the same 
proof to work for Hitchin pairs.

Let $C$ be a smooth projective curve, and let $(E,\theta)\in \Hi^V(C)$ be a stable (or semi-stable) Hitchin 
pair with $\mu(E)\notin [-1,n-1]$ (or $\mu(E)\notin (-1,n-1)$). Let $\pi_n\colon C^n\to C^{(n)}$ be the 
$\sym_n$-quotient morphism. As discussed in \autoref{subsect:equi}, the pull-back $\pi_n^*(E,\theta)^{[n]}\in 
\Hi^{\pi_n^*V^{[n]}}(C^n)$ of the tautological Hitchin bundle carries a canonical $\sym_n$-linearization. By 
\autoref{lem:stabpullbackequi}, for $(E,\,\theta)^{[n]}$ to be stable (or semi-stable), it is sufficient to 
prove that, for every $\sym_n$-equivariant Hitchin subsheaf $A\subset \pi_n^*E^{[n]}$ with $\rank A< 
\rank(\pi_n^*E^{[n]})\,=\,n\rank(E)$, we have the inequality $\mu_{\wH_n}(A)\,<\, \mu_{\wH_n}(\pi^* E^{[n]})$ (or 
$\mu(A)\,\le\, \mu(\pi^* E^{[n]})$).

The key to the proof of the stability criterion for tautological bundles in \cite{Krug--stab} is the 
existence of a short exact sequence
\begin{align}\label{eq:keyses}
0\to \pr_1^* E(-\delta_n(1))\xrightarrow{i_E} \pi_n^* E^{[n]}\xrightarrow{p_E} \overline\pr_{1}^*\pi_{n-1}^*E^{[n-1]}\to 0\,, 
\end{align}
where $\pr_1\colon C^n\to C$ and $\overline\pr_1\colon C^n\to C^{n-1}$ are the projections to the first factor and to the last $n-1$ factors, respectively, and $\delta_n(1)=\sum_{i=2}^n\Delta_{1i}$ where 
\[
 \Delta_{1i}=\{(x_1,\dots,x_n)\in X^n\mid x_1=x_i\}\,;
\]
see \cite[Prop 1.5]{Krug--stab}.
Let now $A\subset \pi^*_nE^{[n]}$ be an equivariant Hitchin subsheaf. Setting $A':=i_E^{-1}A\subset \pr_1^* E(-\delta_n(1))$ and $A'':=p_E(A)$, we get a commutative diagram
\begin{align}\label{diag:full}
\xymatrix{
 & 0 \ar[d] & 0 \ar[d] & 0 \ar[d] &\\
0\ar[r] & A' \ar[r] \ar[d] & A\ar[r]\ar[d] & A''\ar[r]\ar[d]& 0\\
0\ar[r] & \pr_1^* E(-\delta_n(1))\ar^{\quad i_E}[r] & \pi_n^* E^{[n]}\ar^{p_E\quad}[r]& \overline \pr_{1}^*\pi_{n-1}^*E^{[n-1]}\ar[r] & 0}\,.
\end{align}
with exact columns and rows.
The proof in \cite{Krug--stab} uses the (semi-)stability of $E$ (by assumption) and the (semi-)stability of $E^{[n-1]}$ (by induction) to find upper bounds for the slopes of $A'$ and $A''$, which lead to the desired inequality $\mu_{\wH_n}(A)< \mu_{\wH_n}(\pi^* E^{[n]})$ (or $\mu(A)\le \mu(\pi^* E^{[n]})$). However, we only know the semi-stability of $(E,\theta)$ as a Hitchin pair, which does not imply the stability of $E$ as an ordinary vector bundle; see \autoref{ex:stab}. The reason that, nevertheless, the proof of \emph{loc.\ cit.}\ works for Hitchin pairs is that we can enhance \eqref{eq:keyses} to a short exact sequence of $\pi_n^*V^{[n]}$-cotwisted Hitchin pairs. 

For this, we need to have a look at the construction of the short exact sequence \eqref{eq:keyses} in the proof 
of \cite[Prop 1.5]{Krug--stab}. We have $(\pi_n\times \id_{C})^{-1}\Xi_n=D_n$, where $D_n=\bigcup_{k=1}^n 
\Gamma_{k}\subset C^n\times C$ is the union of the graphs $\Gamma_k:=\Gamma_{\pr_k}$ of the projections 
$\pr_k\colon C^n\to C$ to the $k$-th factor. It follows by flat base change that $E^{[n]}\cong 
\Phi_{\reg_{D_n}}(E)$. Under this isomorphism, \eqref{eq:keyses} is induced by the canonical short exact sequence 
of Fourier--Mukai kernels
\begin{align}\label{eq:Dses}
 0\to \reg_{\Gamma_{\pr_1}}(-\sum_{k=2}^n [\Gamma_{\pr_k}\cap \Gamma_{\pr_1}])\to \reg_{D_n}\to \reg_{\cup_{k=2}^n\pr_k}\to 0
\end{align} 
together with the isomorphism $\reg_{\cup_{k=2}^n\pr_k}\cong (\overline\pr_1\times\id_C)^*\reg_{D_{n-1}}$.
In other words, the short exact sequence \eqref{eq:keyses} is isomorphic to the exact sequence 
\begin{align}\label{eq:keyses2}
0\to \Phi_{\reg_{\Gamma_{\pr_1}}(-\sum_{k=2}^n [\Gamma_{\pr_k}\cap \Gamma_{\pr_1}])}(E)\to \Phi_{\reg_{D_n}}(E)\to \Phi_{\reg_{\cup_{k=2}^n\pr_k}}(E)\to 0\,. 
\end{align}
By \autoref{lem:ses} we get an enhanced version of \eqref{eq:keyses2} in the form of a short exact sequence
\begin{align}\label{eq:keyses3}
0\to \Phi^{D_n}_{\reg_{\Gamma_{\pr_1}}(-\sum_{k=2}^n [\Gamma_{\pr_k}\cap \Gamma_{\pr_1}])}(E,\theta)\to \Phi^{D_n}_{\reg_{D_n}}(E,\theta)\to \Phi^{D_n}_{\reg_{\cup_{k=2}^n\pr_k}}(E,\theta)\to 0 
\end{align}
of $\pi_n^*V^{[n]}$-cotwisted Hitchin pairs. The surjections $\reg_{D_n}\to\reg_{\Gamma_1}$ and $\reg_{D_n}\to \reg_{\cup_{k=2}^n\pr_k}\cong (\overline\pr_i\times\id_C)^*\reg_{D_{n-1}}$ induce surjections
\[
 \phi\colon \pi_n^*V^{[n]}\to \pr_1^*V\quad,\quad\psi\colon \pi_n^*V^{[n]}\to \overline\pr_1^*\pi_{n-1}^*V^{[n-1]}\,.
\]
Applying \autoref{ex:FMpull} together with \autoref{lem:FMinduced} to the first term of \eqref{eq:keyses3}, \autoref{FMbasechange} (recalling the isomorphism $(\pi_n\times \id_{C})^*\reg_{\Xi_n}\cong\reg_{D_n}$) to the second term of \eqref{eq:keyses3}, and \autoref{FMbasechange} (recalling the isomorphism $\reg_{\cup_{k=2}^n\pr_k}\cong (\overline\pr_i\times\id_C)^*\reg_{D_{n-1}}$) together with \autoref{lem:FMinduced} to the third term of \eqref{eq:keyses3}, we see that \eqref{eq:keyses3} is isomorphic to 
\begin{align}\label{eq:Hitchinkeyses}
0\to \phi^\#\pr_1^* (E,\theta)\otimes \reg(-\delta_n(1))\xrightarrow{i_E} \pi_n^* (E,\theta)^{[n]}\xrightarrow{p_E} \psi^\#\overline\pr_{1}^*\pi_{n-1}^*(E,\theta)^{[n-1]}\to 0\,.
\end{align}

\begin{lemma}\label{lem:A'''}
Let, as above, $A$ be an $\sym_n$-equivariant Hitchin subsheaf of $\pi^*_n(E,\theta)^{[n]}$, and let $A':=i_E^{-1}(A)\subset \pr_1^* E(-\delta_n(1))$ and $A'':=p_E(A)\subset \overline\pr_1^*\pi_{n-1}^*E^{[n-1]}$.
\begin{enumerate}
 \item $A'$ is a Hitchin subsheaf of the $\pr_1^*V$-cotwisted Hitchin pair $\pr_1^* (E,\theta)\otimes \reg(-\delta_n(1))$.
 \item $A''$ is a Hitchin subsheaf of the $\overline\pr_{1}^*\pi_{n-1}^*V^{[n-1]}$-cotwisted Hitchin pair $\overline\pr_{1}^*\pi_{n-1}^*(E,\theta)^{[n-1]}$. 
\end{enumerate}
\end{lemma}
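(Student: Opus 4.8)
The plan is to observe that, once we pass to the common cotwisting bundle, both assertions become immediate consequences of the closure properties of Hitchin subsheaves recorded in \autoref{lem:subsheaves}. The essential point is bookkeeping: although $A'$ and $A''$ are claimed to be Hitchin subsheaves for the \emph{natural} cotwistings $\pr_1^*V$ and $\overline\pr_1^*\pi_{n-1}^*V^{[n-1]}$, the short exact sequence \eqref{eq:Hitchinkeyses} presents its two outer terms as $\phi^\#$ and $\psi^\#$ of exactly these natural pairs, hence as $\pi_n^*V^{[n]}$-cotwisted pairs, and the whole sequence lives in the single category $\Hi^{\pi_n^*V^{[n]}}(C^n)$, where $i_E$ and $p_E$ are genuine morphisms of Hitchin pairs and $A$ is by hypothesis a Hitchin subsheaf of the middle term. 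Since $\phi$ and $\psi$ are surjective, \autoref{lem:induced}(iii) says that for the sheaf $A'$ (resp.\ $A''$) the property of being a Hitchin subsheaf for the natural cotwisting is equivalent to being one for the $\pi_n^*V^{[n]}$-cotwisting. So I would first reduce to showing that $A'$ and $A''$ are Hitchin subsheaves of, respectively, the first and third terms of \eqref{eq:Hitchinkeyses}, all now regarded in $\Hi^{\pi_n^*V^{[n]}}(C^n)$.

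For part (ii) this is then immediate: $p_E$ is a morphism of $\pi_n^*V^{[n]}$-cotwisted Hitchin pairs and $A$ is a Hitchin subsheaf of its source, so $A''=p_E(A)$ is a Hitchin subsheaf of the target by \autoref{lem:subsheaves}(ii). Feeding this back through \autoref{lem:induced}(iii) gives that $A''$ is a Hitchin subsheaf of $\overline\pr_1^*\pi_{n-1}^*(E,\theta)^{[n-1]}$, as desired.

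For part (i) I would use that the preimage of a Hitchin subsheaf under a morphism of Hitchin pairs is again a Hitchin subsheaf — a one-line verification in the same spirit as \autoref{lem:subsheaves}: if $\gamma\colon (P,\vartheta)\to(E,\Theta)$ is a morphism of $W$-cotwisted Hitchin pairs and $A\subset E$ is a Hitchin subsheaf, then $\gamma\bigl(\vartheta(\gamma^{-1}(A)\otimes W)\bigr)=\Theta\bigl(\gamma(\gamma^{-1}(A))\otimes W\bigr)\subset\Theta(A\otimes W)\subset A$, whence $\vartheta(\gamma^{-1}(A)\otimes W)\subset\gamma^{-1}(A)$. Applying this to the Hitchin morphism $i_E$ and the Hitchin subsheaf $A$ shows at once that $A'=i_E^{-1}(A)$ is a Hitchin subsheaf of the first term of \eqref{eq:Hitchinkeyses}, and the reduction above yields the claim for the $\pr_1^*V$-cotwisting. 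Alternatively, since $i_E$ is injective one can identify $A'$ with $\ker(p_E)\cap A$ and invoke \autoref{lem:subsheaves}(i), using that $\ker(p_E)=\im(i_E)$ is a Hitchin subsheaf of the middle term.

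I do not expect a serious obstacle: the entire content is the observation that \eqref{eq:Hitchinkeyses} is a sequence in one fixed category of cotwisted Hitchin pairs, after which the stability of the class of Hitchin subsheaves under images, intersections, and preimages does all the work. The only step requiring genuine care is the correct use of \autoref{lem:induced}(iii) to move between the three cotwisting bundles $\pr_1^*V$, $\overline\pr_1^*\pi_{n-1}^*V^{[n-1]}$ and the common $\pi_n^*V^{[n]}$; the $\sym_n$-equivariance of $A$ plays no role in this lemma and is simply carried along.
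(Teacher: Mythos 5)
Your proposal is correct and follows essentially the same route as the paper: the paper's proof likewise deduces both parts from the short exact sequence \eqref{eq:Hitchinkeyses} together with \autoref{lem:subsheaves} and \autoref{lem:induced}(iii), with your identification $A'\cong\im(i_E)\cap A=\ker(p_E)\cap A$ being exactly how \autoref{lem:subsheaves}(i) is meant to handle part (i). Your direct verification that preimages of Hitchin subsheaves under morphisms of Hitchin pairs are again Hitchin subsheaves is a valid (and equivalent) alternative to that intersection argument.
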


\begin{proof}
 Since we now have the short exact sequence of Hitchin pairs \eqref{eq:Hitchinkeyses}, this follows directly from \autoref{lem:subsheaves} and \autoref{lem:induced}(iii).
\end{proof}

Now, as in \cite[Subsect.\ 2.2]{Krug--stab}, we choose the points $x_2,\dots, x_n$ in such a way that all rows and columns of the diagram \eqref{diag:full} stay exact after pull-back along the embedding
\[
\iota\colon C\hookrightarrow C^n\quad,\quad\iota(t)=(t,x_2,\dots, x_n)\,. 
\]
Since $\iota^*\reg_{C^n}(-\delta_n(1))\cong\reg_C(-\sum_{i=2}^nx_i)$ and $\pr_1\circ \iota= \id_C$, we have an isomorphism of $\iota^*\pr_1^*V\cong V$-cotwisted Hitchin pairs 
\[
 \iota^*\pr_1^* \bigl((E,\theta)\otimes \reg(-\delta_n(1))\bigr)\cong (E,\theta)\otimes \reg(-\sum_{i=2}^nx_i) 
\]
By \autoref{lem:A'''}(i), $\iota^*A'$ is a Hitchin subsheaf of $(E,\theta)\otimes \reg(-\sum_{i=2}^nx_i)$.
Since, by assumption, $(E,\theta)$ is stable (or semi-stable), so is $(E,\theta)\otimes \reg(-\sum_{i=2}^nx_i)$; see \autoref{rem:stabletwisted}. Hence, 
 we have $\mu(\iota^*A')<\mu(E(-\sum_{i=2}^nx_i))$ (or $\mu(\iota^*A')\le\mu(E(-\sum_{i=2}^nx_i))$). Now, in the case $\mu(E)>n-1$ (or $\mu(E)\ge n-1$), we can follow the computations of \cite[Subsect.\ 2.2]{Krug--stab} to see that $\mu_{\wH_n}(A)<\mu_{\wH_n}(\pi_n^*E^{[n]})$ (or $\mu_{\wH_n}(A)\le \mu_{\wH_n}(\pi_n^*E^{[n]})$) which, as discussed above, by \autoref{lem:stabpullbackequi} implies that $E^{[n]}$ is stable (or semi-stable).

If $\mu(E)\le -1$, as in \cite[Subsect.\ 2.3]{Krug--stab}, we consider the closed embedding 
\[
 \iota\colon C^{n-1}\hookrightarrow C^n\quad,\quad \iota(t_2,\dots, t_n)=(x,t_2,\dots, t_n)\,,
\]
where, again, $x\in C$ is chosen in such a way that all rows and columns of the diagram \eqref{diag:full} stay exact after pull-back along $\iota^*$. Since $\overline\pr_1\circ \iota=\id_{C^{n-1}}$, we have an isomorphism of $\iota^*\overline\pr_{1}^*\pi_{n-1}^*V^{[n-1]}\cong \pi_{n-1}^*V^{[n-1]}$-cotwisted Hitchin pairs
\[
\iota^*\overline\pr_{1}^*\pi_{n-1}^*(E,\theta)^{[n-1]}\cong \pi_{n-1}^*(E,\theta)^{[n-1]}\,. 
\]
By induction, we may assume that $\pi_{n-1}^*(E,\theta)^{[n-1]}$ is stable (or semi-stable). By 
\autoref{lem:A'''}(ii), $\iota^*A''$ is a Hitchin subsheaf of $\pi_{n-1}^*(E,\theta)^{[n-1]}$.
Hence, we get that
$\mu(\iota^*A'')<\mu(\pi_{n-1}^*E^{[n-1]})$ (or $\mu(\iota^*A'')\le\mu(\pi_{n-1}^*E^{[n-1]})$). Now, the assertion in the case $\mu(E)<-1$ (or $\mu(E)\le -1$) follows from the computations in \cite[Subsect.\ 2.3]{Krug--stab}. 

\section{Higher dimensions}

\subsection{Reconstruction for higher dimensions}

In this section, we proof \autoref{thm:rechigher}.
A version of this statement for coherent sheaves without the structure of a Hitchin bundle was proved in \cite[Thm.\ 3.2 \& Rem.\ 3.5]{Krug--reconstruction}. Since the arguments which allow to lift the results to Hitchin pairs are very similar to those of the previous two sections, we will restrict ourselves to a quiet terse explanation this time. 

Recall that there is the Hilbert--Chow morphism $\mu\colon X^{[n]}\to X^{(n)}$ and the quotient morphism $\pi\colon X^n\to X^{(n)}$. We consider the open subsets 
\[
 X^{n}_0:=\{(x_1,\dots,x_n)\in X^n\mid x_i\neq x_j \text{ for $i\neq j$}\}\subset X^n\,,
\]
$X^{(n)}_0:=\pi(X^n_0)\subset X^{(n)}$, and $X^{[n]}_0:=\mu^{-1}(X^{(n)}_0)\subset X^{[n]}$. This gives the following diagram with Cartesian squares 
\[
\xymatrix{
 X^{[n]}_0 \ar^{\mu_0}[r] \ar^{i}[d] & X^{(n)}_0\ar[d] & X^n_0 \ar_{\pi_0}[l] \ar^{j}[d]\\
 X^{[n]}\ar^{\mu}[r] & X^{(n)}& X^n \ar_{\pi}[l]
}
\]
where all the vertical arrows are open immersions, and $\mu_0$ is an isomorphism.

For $(E,\theta)\in \Hi^V(X)$, we define the $\sym_n$-equivariant $(\oplus_{i=1}^n\pr_i^*V)$-cotwisted Hitchin pair $\CC(E,\theta)=(\oplus_{i=1}^n\pr_i^*E,\tilde\theta, \lambda)$ by 
\[
 \tilde\theta\colon \left(\bigoplus_{i=1}^n\pr_i^* E\right)\otimes \left(\bigoplus_{i=1}^n\pr_i^* V\right)\cong \bigoplus_{i,j=1}^n\pr_i^*E\otimes \pr_j^*V\to \bigoplus_{i=1}^n\pr_i^*(E\otimes V)\xrightarrow{\oplus \pr_i^*\theta_i} \bigoplus_{i=1}^n\pr_i^*E\,,
\]
where the middle map is the projection to the appropriate factors, and 
$\lambda_g\colon \oplus_i \pr_i^*E\to g^*(\oplus_i\pr_i^*E)$ is the direct sum of the canonical isomorphisms $\pr_i^*E\xrightarrow\sim g^*\pr_{g(i)}^*E$.

Note that the open subset $X^n_0\subset X^n$ is stable under the $\sym_n$-action on $X^n$, hence the morphism $\pi_0\colon X^n_0\to X^{(n)}_0$ is a $\sym_n$-quotient. Thus, as explained in \autoref{subsect:equi}, the Higgs bundle $j_*\pi_0^*\mu_{0*}i^*(E,\theta)^{[n]}$ carries a canonical $\sym_n$-linearization.

\begin{lemma}\label{lem:C}
For every $(E,\theta)\in \Hi^V(X)$ such that $E$ is reflexive, we have an isomorphism of $\sym_n$-equivariant $(\oplus_{i=1}^n\pr_i^*V)$-cotwisted Hitchin pairs 
\[j_*\pi_0^*\mu_{0*}i^*(E,\theta)^{[n]}\cong \CC(E,\theta)\,.\]
\end{lemma}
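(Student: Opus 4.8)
The plan is to reduce the statement to a computation over the open locus $X^{(n)}_0$ where all points are distinct, and then to extend across the complement using the reflexivity hypothesis. On the open part, the universal family splits as a disjoint union of graphs, so the tautological Hitchin pair decomposes explicitly and can be matched with $\CC(E,\theta)$. Off the open part, the complement has codimension $\geq 2$ in $X^n$ (since $\dim X\geq 2$), so a reflexivity argument identifies the two sheaves, and the extension is compatible with both the Higgs fields and the $\sym_n$-linearizations.

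First I would work on $X^{(n)}_0$. Since $\mu_0$ is an isomorphism, we have $\mu_{0*}i^*(E,\theta)^{[n]}\cong \mu_{0*}(E,\theta)^{[n]}_{\mid X^{[n]}_0}$, and by base change (\autoref{FMbasechange}, via \autoref{lem:baseuniversal}) this is the Hitchin enhanced Fourier--Mukai transform computed over $X^{(n)}_0$. Pulling back along $\pi_0$ and using that $X^n_0\to X^{(n)}_0$ is the quotient by the free $\sym_n$-action, the universal subscheme restricts to the disjoint union of the $n$ graphs $\Gamma_{\pr_i}\subset X^n_0\times X$. By \autoref{ex:FMpull} applied to each graph, together with the additivity of the Fourier--Mukai transform over a direct sum of structure sheaves (as in the computation in \eqref{eq:fj}), one obtains $\pi_0^*\mu_{0*}i^*(E,\theta)^{[n]}\cong \bigoplus_{i=1}^n\pr_i^*(E,\theta)_{\mid X^n_0}$, and one checks that the induced Higgs field is exactly the diagonal field $\tilde\theta$ and that the canonical linearization is $\lambda$ permuting the summands; this is precisely $\CC(E,\theta)_{\mid X^n_0}$.

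Next I would extend this isomorphism from $X^n_0$ to all of $X^n$ via $j_*$. The key geometric input is that $X^n\setminus X^n_0$ is the union of the big diagonals, which has codimension $\dim X\geq 2$. Since $E$ is reflexive, each $\pr_i^*E$ is reflexive, hence so is $\bigoplus_i\pr_i^*E=\For\CC(E,\theta)$; therefore the natural map $\CC(E,\theta)\to j_*j^*\CC(E,\theta)=j_*\CC(E,\theta)_{\mid X^n_0}$ is an isomorphism, because a reflexive sheaf equals the pushforward of its restriction across a closed set of codimension $\geq 2$. Applying $j_*$ to the isomorphism of Hitchin pairs established on $X^n_0$ then yields $j_*\pi_0^*\mu_{0*}i^*(E,\theta)^{[n]}\cong j_*\CC(E,\theta)_{\mid X^n_0}\cong \CC(E,\theta)$, and since $j_*$ is a functor on the (equivariant) category of Hitchin pairs, the isomorphism is automatically compatible with the Higgs fields and with the $\sym_n$-linearizations.

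\textbf{Main obstacle.} The delicate point is keeping track of the cotwisting bundles and the linearizations through the base-change and pushforward steps, rather than the sheaf-level identification, which is standard. Concretely, one must verify that the isomorphism produced on $X^n_0$ is genuinely $\sym_n$-equivariant for the $\lambda$-action and is a morphism in $\Hi^{\oplus_i\pr_i^*V}$, i.e.\ that the surjections of cotwisting bundles used in the $\phi^\#$-functors match up under $j_*$; once reflexivity guarantees the underlying sheaf isomorphism, checking these compatibilities reduces to the explicit descriptions in \autoref{ex:FMpull}, \autoref{lem:FMinduced}, and \autoref{FMbasechange}, and to the fact that $j_*$ of an equivariant morphism is again equivariant.
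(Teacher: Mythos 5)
Your proposal is correct and takes essentially the same route as the paper: the paper delegates the sheaf-level isomorphism (your open-locus decomposition into the graphs $\Gamma_{\pr_i}$ plus the reflexivity/codimension-$\geq 2$ extension along $j$) to \cite[Lem.\ 1.1]{Stapletontaut} and \cite[Sect.\ 3.1]{Krug--reconstruction}, and then, just as you do, verifies that the two Hitchin-pair structures agree by checking on the dense open subset $X^n_0$ via the Fourier--Mukai formalism of \autoref{subsect:FM}. Your write-up simply makes the content of those citations explicit rather than quoting them.
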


\begin{proof}
For vector bundles without the structure of a Hitchin pair, this is \cite[Lem.\ 1.1]{Stapletontaut}. For the straightforward generalization to reflexive sheaves, see \cite[Sect.\ 3.1]{Krug--reconstruction}. That the two structures of Hitchin pairs $\left(\bigoplus_{i=1}^n\pr_i^* E\right)\otimes \left(\bigoplus_{i=1}^n\pr_i^* V\right)\to \bigoplus_{i=1}^n\pr_i^*E$ on both sides of the alleged isomorphism agree can be checked on the open dense subset $X^n_0\subset X^n$. This can be done either using the results on Hitchin enhanced Fourier--Mukai transforms of \autoref{subsect:FM}, or the concrete description of $\theta^{\{n\}}$ of \autoref{rem:taut}. 
\end{proof}

\begin{proof}[Proof of \autoref{thm:rechigher}] 
Let $\delta\colon X\hookrightarrow X^n$, $\delta(x)=(x,\dots,x)$ be the embedding of the small diagonal. 
We have $\delta^*\CC(E,\theta)\cong (E,\theta)^{\oplus n}$ with the induced $\sym_n$-linearization given by permutation of the direct summands. Hence, the $\sym_n$-invariants are given by $\bigl(\delta^*\CC(E,\theta)\bigr)^{\sym_n}\cong (E,\theta)$. By \autoref{lem:C}, this implies
\[
 \bigl(\delta^*j_*\pi_0^*\mu_{0*}i^*(E,\theta)^{[n]}\bigr)^{\sym_n}\cong (E,\theta)\,,
\]
which means that we can reconstruct the isomorphism class of $(E,\theta)$ form $(E,\theta)^{[n]}$. 
\end{proof}

\subsection{Stability of tautological Hitchin bundles on Hilbert schemes of points on 
surfaces}\label{subsect:surfacestab}

Let $X$ be a smooth projective surface, and let $H$ be an ample divisor on $X$. There is a unique divisor 
$H_{(n)}$ on the symmetric product $X^{(n)}$ with $\pi_n^*H_{(n)}=\sum_{i=1}^n\pr_i^*H$. We set 
$H_{[n]}:=\mu^*H_{(n)}$. This divisor is big and nef, but not ample, as it is trivial along the exceptional 
divisor of $\mu$. The definitions of stable vector bundles and Hitchin bundles still make sense for non-ample 
divisors. In \cite[Sect.\ 1]{Stapletontaut}, it is shown that, if $E$ is a $H$-slope stable vector bundle on 
$X$ with $E\not\cong \reg_X$, the associated tautological bundle $E^{[n]}$ is $H_{[n]}$-slope stable. Using 
\autoref{lem:C} while going through the proof of \cite{Stapletontaut}, it is quite easy to see that this result 
also generalizes to Hitchin pairs: If $(E,\theta)$ is a $H$-slope stable Hitchin pair on $X$ with $E\not \cong 
\reg_X$, then $(E,\theta)^{[n]}$ is $H_{[n]}$-slope stable.

In \cite{Stapletontaut}, it is shown that in a neighborhood of $H_{[n]}$ in $\NN^1(X)$, there is also an ample 
class $I$ such that $E^{[n]}$ is $I$-slope stable if $E\not\cong \reg_X$ is $H$-slope stable. It seems likely 
that also this result generalizes to Hitchin pairs, but we have not checked the details.

\section*{Acknowledgements}

The first-named author wishes to thank Philipps-Universit\"at Marburg for hospitality while this work
was carried out. He also acknowledges a partial support of a J. C. Bose Fellowship.


\begin{thebibliography}{ZZZZZZ}

\bibitem[Ago17]{Agostini}
Daniele Agostini.
\newblock Asymptotic syzygies and spanned line bundles.
\newblock {\em arXiv:1706.03508}, 2017.


\bibitem[BN12]{Biswas-Nagaraj--reconstructioncurves}
Indranil Biswas and D.~S. Nagaraj.
\newblock Reconstructing vector bundles on curves from their direct image on
symmetric powers.
\newblock {\em Arch. Math. (Basel)}, 99(4):327--331, 2012.

\bibitem[BN17]{Biswas-Nagaraj--reconstructionsurfaces}
Indranil Biswas and D.~S. Nagaraj.
\newblock Fourier-{M}ukai transform of vector bundles on surfaces to {H}ilbert
scheme.
\newblock {\em J. Ramanujan Math. Soc.}, 32(1):43--50, 2017.

\bibitem[Bo95]{Bo} Francesco Bottacin. Symplectic geometry on moduli spaces of stable pairs,
{\it Ann. Sci. \'Ecole Norm. Sup.} {\bf 28} (1995), 391--433.

\bibitem[EL15]{Ein-Lazarsfeld--gonalityconj}
Lawrence Ein and Robert Lazarsfeld.
\newblock The gonality conjecture on syzygies of algebraic curves of large degree.
\newblock {\em Publ. Math. Inst. Hautes \'{E}tudes Sci.}, 122:301--313, 2015.

\bibitem[Hi87]{Hi} Nigel J. Hitchin. The self-duality equations on a Riemann surface,
{\it Proc. London Math. Soc.} {\bf 55} (1987), 59--126.


\bibitem[Leh99]{LehnChern}
Manfred Lehn.
\newblock Chern classes of tautological sheaves on {H}ilbert schemes of
points on surfaces.
\newblock {\em Invent. Math.}, 136(1):157--207, 1999.

\bibitem[LS03]{LehnSorger2}
Manfred Lehn and Christoph Sorger.
\newblock The cup product of {H}ilbert schemes for {$K3$} surfaces.
\newblock {\em Invent. Math.}, 152(2):305--329, 2003.

\bibitem[LS01]{LehnSorger1}
Manfred Lehn and Christoph Sorger.
\newblock Symmetric groups and the cup product on the cohomology of
{H}ilbert schemes
\newblock {\em Duke Math. J.}, 110(2):345--357, 2001.


\bibitem[Kru18a]{Krug--reconstruction}
Andreas Krug and J\o rgen Vold Rennemo.
\newblock Some ways to reconstruct a sheaf from its tautological image on a
{H}ilbert scheme of points.
\newblock {\em arXiv:1808.05931}, 2018.

\bibitem[Kru18b]{Krug--stab}
Andreas Krug.
\newblock Stability of tautological bundles on symmetric products of curves.
\newblock {\em arXiv:1809.06450}, 2018.

\bibitem[Mo06]{Mo1} Takuro Mochizuki. Kobayashi-Hitchin correspondence for tame harmonic
bundles and an application, {\it Ast\'erisque} No. 309 (2006).

\bibitem[Mo14]{Mo2} Takuro Mochizuki. Wild harmonic bundles and twistor ${\mathcal D}$-modules.
{\it Proceedings of the International Congress of Mathematicians--Seoul} 2014,
Vol. 1, 499--527, Kyung Moon Sa, Seoul, 2014.

\bibitem[Nag17]{Nagaraj--sym}
D. S. Nagaraj.
\newblock Vector bundles on symmetric product of curves.
\newblock {\em arXiv:1702.05294}, 2017.

\bibitem[Ngo10]{Ngo} Bao Ch\^{a}u Ng{\^o}. Le lemme fondamental pour les alg\`ebres de {L}ie,
\textit{Publ. Math. Inst. Hautes \'Etudes Sci.} {\bf 111} (2010), {1--169}.

\bibitem[Si88]{Si} Carlos~T. Simpson. Constructing variations of Hodge structure using Yang-Mills
theory and applications to uniformization, {\it Jour. Amer. Math. Soc.}
{\bf 1} (1988), 867--918.

\bibitem[Sim94]{Simpson--moduliI}
Carlos~T. Simpson.
\newblock Moduli of representations of the fundamental group of a smooth
projective variety. {I}.
\newblock {\em Inst. Hautes \'{E}tudes Sci. Publ. Math.}, (79):47--129, 1994.

\bibitem[Sta16]{Stapletontaut}
David Stapleton.
\newblock Geometry and stability of tautological bundles on {H}ilbert schemes
of points.
\newblock {\em Algebra Number Theory}, 10(6):1173--1190, 2016.

\bibitem[Voi02]{Voisin-syzygy}
Claire, Voisin.
\newblock Green's generic syzygy conjecture for curves of even genus
lying on a {$K3$} surface
\newblock {\em J. Eur. Math. Soc. (JEMS)}, 4:363--404, 2002.




\end{thebibliography}
\end{document}